\theoremstyle{plain}
\newtheorem{thm}{Theorem}
\newtheorem{prop}{Proposition}
\newtheorem{cor}{Corollary}
\newtheorem{remark}{Remark}
\newtheorem{cla}{Claim}
\renewcommand{\arraystretch}{1.3}
\renewcommand{\nomgroup}[1]{%
\ifthenelse{\equal{#1}{A}}{\item[\emph{\textbf{Abbreviations}}]}{%
\ifthenelse{\equal{#1}{B}}{\item[\emph{\textbf{Sets and Indices}}]}{%
\ifthenelse{\equal{#1}{C}}{\item[\emph{\textbf{Parameters in Pre-Dispatch Stage}}]}{%
\ifthenelse{\equal{#1}{D}}{\item[\emph{\textbf{Decision Variables in Pre-Dispatch Stage}}]}{%
\ifthenelse{\equal{#1}{E}}{\item[\emph{\textbf{Parameters and Decision Variables in Re-Dispatch Stage}}]}{%
	\ifthenelse{\equal{#1}{F}}{\item[\emph{\textbf{XX}}]}
}
				}
			}
		}
	}
}
\begin{document}
%

\title{Data-Driven Two-Stage Distributionally Robust Dispatch of Multi-Energy Microgrid}

%
%

\author{Xunhang~Sun,~\IEEEmembership{Graduate~Student~Member,~IEEE},
        Xiaoyu~Cao,~\IEEEmembership{Member,~IEEE},
        Bo~Zeng,~\IEEEmembership{Member,~IEEE},\\
        Miaomiao Li, 
        Xiaohong~Guan,~\IEEEmembership{Life~Fellow,~IEEE},
        and~Tamer~Ba\c{s}ar,~\IEEEmembership{Life~Fellow,~IEEE}
        \vspace{-1cm} 
\thanks{This work was partially supported by the National Key R\&D Program of China under Grant 2022YFA1004600, and by NSFC under Grant 624B2111, Grant 62373294, and Grant 62192752. \emph{(Corresponding author: Xiaoyu Cao.)}}%
\thanks{X. Sun, X. Cao, and X. Guan are with the School of Automation Science and Engineering, Xi’an Jiaotong University, Xi’an 710049, Shaanxi, China (e-mail: xhsun@sei.xjtu.edu.cn; cxykeven2019@xjtu.edu.cn; xhguan@xjtu.edu.cn).}%
\thanks{B. Zeng is with the Department of Industrial Engineering, University of Pittsburgh, Pittsburgh, PA 15106 USA (e-mail: bzeng@pitt.edu).}%
\thanks{M. Li is with the National Innovation Platform (Center) for Industry-Education Integration of Energy Storage Technology, Xi’an Jiaotong University, Xi’an 710049, Shaanxi, China (e-mail: mmiaoli@stu.xjtu.edu.cn).}%
\thanks{T. Ba\c{s}ar is with the Coordinated Science Laboratory, University of Illinois Urbana-Champaign, Urbana, IL 61801 USA (e-mail: basar1@illinois.edu).}%
}

\maketitle


\begin{abstract}
This paper studies adaptive distributionally robust dispatch (DRD) of the multi-energy microgrid under supply and demand uncertainties. A Wasserstein ambiguity set is constructed to support data-driven decision-making. By fully leveraging the special structure of worst-case expectation from the primal perspective, a novel and high-efficient decomposition algorithm under the framework of  column-and-constraint generation is customized and developed to address the computational burden. Numerical studies demonstrate the effectiveness of our DRD approach, and shed light on the interrelationship of it with the traditional dispatch approaches  through  stochastic programming and robust optimization schemes. Also, comparisons with popular algorithms in the literature for  two-stage distributionally robust optimization verify the powerful capacity of our algorithm in computing the DRD problem.
\end{abstract}

\begin{IEEEkeywords}
Distributionally robust optimization, Wasserstein metric, column-and-constraint generation, microgrid dispatch. \vspace{-15pt}
\end{IEEEkeywords}

%
\IEEEpeerreviewmaketitle

\printnomenclature
\vspace{-10pt}

\nomenclature[C]{$T/t/\Delta_t$}{Total number/index/duration of time slots}

\nomenclature[C]{$c_{\rm e,buy}^t/c_{\rm e,sell}^t$}{Electricity pruchasing/selling price}
\nomenclature[C]{$c_{\rm g,buy}$}{Hydrogen procurement price}

\nomenclature[C]{$c_{\rm elz}^{\rm om}/c_{\rm fc}^{\rm om}$}{Unit operation and maintenance cost of ELZ/FC}
\nomenclature[C]{$c_{\rm bss}^{\rm deg}$}{Unit degradation cost of BSS}

\nomenclature[C]{$c_{\rm elz,c}^{\rm su}/c_{\rm elz,c}^{\rm sd}$}{Unit cold startup/shutdown cost of ELZ}
\nomenclature[C]{$c_{\rm elz,w}^{\rm su}/c_{\rm elz,w}^{\rm sd}$}{Unit warm startup/shutdown cost of ELZ}
\nomenclature[C]{$c_{\rm fc}^{\rm su}/c_{\rm fc}^{\rm sd}$}{Unit startup/shutdown cost of FC}

\nomenclature[C]{$\overline{P}_{\rm wt}/\overline{P}_{\rm pv}/\overline{P}_{\rm bss}/\overline{P}_{\rm sub}$}{Capacity of wind turbine (WT)/photovoltaic (PV) system/BSS/substation}%

\nomenclature[C]{$\overline{P}_{\rm elz}/\underline{P}_{\rm elz}$}{Maximum/minimum power input of ELZ in production state}%
\nomenclature[C]{$\overline{P}_{\rm fc}/\underline{P}_{\rm fc}$}{Maximum/minimum power output of FC}%
\nomenclature[C]{$P_{\rm elz,s}$}{Power consumption of ELZ in standby state}%

\nomenclature[C]{$\overline{E}_{\rm bss}/\underline{E}_{\rm bss}$}{Maximum/minimum state-of-charge of BSS}%

\nomenclature[C]{$\eta_{\rm bss,c}/\eta_{\rm bss,d}$}{Charging/discharging efficiency of battery storage system (BSS)}%

\nomenclature[C]{$\eta_{\rm elz}/\eta_{\rm fc}$}{Efficiency of ELZ/FC}%

\nomenclature[C]{$LHV_{\rm H_2}$}{Hydrogen low heat value}%

\nomenclature[C]{$\eta_{\rm elz,r}/\eta_{\rm fc,r}$}{Heat recovery efficiency of electrolyzer (ELZ)/fuel cell (FC)}%

\nomenclature[C]{$P_{\rm d}/M_{\rm d}$}{Nominal electricity/heat demand}%

\nomenclature[C]{$\phi_{\rm wt}^{t}/\phi_{\rm pv}^{t}$}{Time-varing factor of WT/PV output}%
\nomenclature[C]{$\phi_{\rm ed}^{t}/\phi_{\rm hd}^{t}$}{Time-varing factor of electricity/heat demand}%

\nomenclature[C]{$\nu_{\rm ht}/\nu_{\rm hwt}$}{Dissipation factor of hydrogen/hot water tank}%
\nomenclature[C]{$\overline{M}_{\rm ht}/\overline{N}_{\rm hwt}$}{Capacity of hydrogen/hot water tank}%

\nomenclature[C]{$\tau_{\rm cold}$}{Time delay of cold startup of ELZ}%

\nomenclature[C]{$\overline{H}_{{\rm buy}}/\overline{U}_{{\rm g,buy}}$}{Limit on hydrogen purchase quantity/number}%

\nomenclature[D]{$p_{\rm buy}^t/p_{\rm sell}^t$}{Electricity pruchasing and selling power}
\nomenclature[D]{$h_{\rm buy}^t$}{Purchased hydrogen energy from market}

\nomenclature[D]{$p_{\rm bss,c}^t/p_{\rm bss,d}^t$}{Charging/discharging power of BSS}

\nomenclature[D]{$u^{t}_{\rm bss}$}{State of BSS: 1-charging; 0-discharging}%
\nomenclature[D]{$e^{t}_{\rm bss}$}{State-of-charge of BSS}%

\nomenclature[D]{$u^{t}_{\rm e,buy}$}{State of electricity transaction: 1-buying; 0-selling}%
\nomenclature[D]{$u^{t}_{\rm g,buy}$}{State of hydrogen purchase: 1-yes; 0-no}%
\nomenclature[D]{$u^{t}_{\rm fc}$}{State of FC: 1-on; 0-off}%
\nomenclature[D]{$u^{t}_{\rm elz,p}/u^{t}_{\rm elz,s}/u^{t}_{\rm elz,i}$}{States of ELZ: $u^{t}_{\rm elz,p}=1$-production; $u^{t}_{\rm elz,s}=1$-standby; $u^{t}_{\rm elz,i}=0$-idle}%
\nomenclature[D]{$y^{t}_{\rm fc}/z^{t}_{\rm fc}$}{Actions of FC: $y^{t}_{\rm fc}=1$-startup; $z^{t}_{\rm fc}=1$-shutdown}%
\nomenclature[D]{$y^{t}_{\rm cold}/z^{t}_{\rm cold}/y^{t}_{\rm warm}/z^{t}_{\rm warm}$}{Actions of ELZ: $y^{t}_{\rm cold}/y^{t}_{\rm warm}=1$-cold/warm startup; $z^{t}_{\rm cold}/z^{t}_{\rm warm}=1$-cold/warm shutdown}%

\nomenclature[D]{$p_{\rm wt}^{t}/p_{\rm pv}^{t}$}{Power output of WT/PV system}%

\nomenclature[D]{$p_{\rm elz,p}^t/p_{\rm elz,s}^t$}{Component of power input of ELZ in production/standby state}
\nomenclature[D]{$p_{\rm elz}^t/g_{\rm elz}^t/m_{\rm elz}^t$}{Power input/hydrogen outflow rate/heat outflow rate of ELZ}
\nomenclature[D]{$p_{\rm fc}^t/g_{\rm fc}^t/m_{\rm fc}^t$}{Power output/hydrogen inflow rate/heat outflow rate of FC}

\nomenclature[D]{$h_{{\rm ht}}^{t}/n_{{\rm ht}}^{t}$}{Hydrogen/heat energy storage level of hydrogen/hot water tank}%

\nomenclature[D]{$m_{{\rm hwt}}^{t}$}{Heat flow of hot water tank}%

\nomenclature[E]{$\tilde{\cdot}$}{Re-dispatch counterpart of parameter and variable of pre-dispatch stage}%
\nomenclature[E]{$\tilde{p}_{\rm loss}^t/\tilde{m}_{\rm loss}^t$}{Unmet electricity/heat deman}%
\nomenclature[E]{$\iota_{\rm e}/\iota_{\rm h}$}{Unit compensation for unmet electricity/heat demand}%
\nomenclature[E]{$\Delta \cdot$}{Deviation variable}%

\section{Introduction}
%
%
%
%

\IEEEPARstart{C}{onsistent} with the target of global carbon neutrality, the multi-energy microgrid (MEMG) \cite{10722871} offers a promising paradigm for low-carbon, efficient, and reliable energy provision by integrating local renewable energy (RE) generators, storage systems, conversion devices, and multiple energy loads. However, RE outputs are highly dependent on meteorological conditions, while energy consumption is inherently variable, leading to significant operational risks. Hence, a dispatch approach that can effectively address various uncertainties is essential for MEMG's supply-demand balance.

The most classical and widely adopted approaches to handling uncertainty in decision-making are stochastic programming (SP) \cite{stochastic1} and robust optimization (RO) \cite{ben2009robust}. To date, they have been extensively applied to MEMG's dispatch (e.g., \cite{10722871,10026631,8967039,zhao2025robust}). SP requires complete knowledge of the probability distribution of random factors, and optimizes the objective in an expectation manner. In \cite{10722871}, a stochastic scheduling approach has been proposed for rural MEMG, considering uncertainties in RE generation and agricultural factors. Under the framework of Stackelberg game, \cite{10026631} has studied stochastic transactive energy management for the interaction between MEMG operator and multi-type users. Nevertheless, in practice, it is usually difficult to obtain the exact distribution, which is the primary limitation of SP. As for RO, it disregards all distributional information about the uncertainties, except for an uncertainty set, and seeks the optimal solution under the worst-case scenario. For hybrid AC/DC MEMG of a ship, a robust coordination approach has been studied in \cite{8967039} to confirm the safety of voyage under uncertain onboard loads, outdoor temperature, and solar power. A robust dynamic MEMG dispatch approach has been proposed in \cite{zhao2025robust} based on hierarchical model predictive control. However, we note that RO inherently leads to over-conservative solutions.

To cope with the aforementioned shortcomings of SP and RO, \textit{distributionally robust optimization} (DRO) \cite{rahimian2022frameworks} has been developed in recent years. Rather than assuming a fixed probability distribution as SP, DRO takes into account unknown perturbations in the underlying distribution, and postulates that the distribution lies in an \textit{ambiguity set}. Further, borrowing the ideas in RO, DRO hedges against the perturbation by optimizing the expected objective under the worst-case distribution. Hence, DRO can be viewed as a unification, as well as a trade-off, of SP and RO. Due to the consideration of partial distributional information, the solution derived by DRO is more robust compared to that by SP, yet remaining less conservative than that by RO. In fact, if the ambiguity set contains solely the true distribution, DRO reduces to SP; if it is large enough that includes all possible distributions, DRO reduces to RO. We notice that DRO has been preliminarily adopted in the literature on MEMG's dispatch, e.g., \cite{9770492,ma2024distributionally}. Nonetheless, the interrelationship between the DRO-based dispatch approach with the traditional SP- and RO-based ones has not been fully investigated.

The ambiguity set is a key component of DRO. Based on the modeling methods for ambiguous distributions, it can be broadly categorized into two groups, i.e., moment-based  and discrepancy-based. The moment-based ambiguity set, which is the primary focus of the early work on DRO, contains all distributions whose moments satisfy some given constraints \cite{delage2010distributionally}. However, it follows an assumption that certain information of the moments are known, which is not often the case in MEMG's practice. Also, the moment-based ambiguity set may lead to conservative decisions \cite{gao2023distributionally}, misaligning with the target of managing MEMG by using DRO. The discrepancy-based one contains all distributions that are close to a reference distribution with respect to a pre-specified discrepancy measure \cite{mohajerin2018data}. By leveraging empirical data, it enables \textit{data-driven} decision-making and has become the emphasis of current DRO research. Besides, it provides an opportunity to control the \textit{level of robustness} \cite{rahimian2022frameworks,mohajerin2018data}. Because of these advantages, the discrepancy-based ambiguity set appears to be well-suited for MEMG's dispatch. Of note, although there are multiple ways to measure the discrepancy, the \textit{Wasserstein metric} has gained increasing popularity in the literature on data-driven DRO due to its favorable theoretical properties \cite{mohajerin2018data,zhao2018data,yue2022linear}.

In the DRO family, two-stage DRO exhibits a strong and flexible modeling capacity by introducing an ingenious recourse decision that unfolds across stages and adapts to  uncertainty. However, solving two-stage DRO problems has long been a significant challenge. Although some decomposition algorithms have been proposed, e.g., in \cite{gamboa2021decomposition,saif2021data,duque2022distributionally}, they may not perform well in complex and practical-scale cases due to the lack of exploiting DRO's unique structure, i.e., the worst-case expectation. Recently, based on classical column-and-constraint generation (C\&CG) \cite{zeng2013solving}, an innovative algorithm framework has been proposed in \cite{lu2024}, showing a powerful computational capacity for general two-stage DRO problems. We believe it should contribute to the specific field of MEMG's dispatch, which generally requires for high computational efficiency in engineering practice.

Therefore, this paper studies the optimal dispatch problem of MEMG considering uncertainties on both the supply and demand sides. Specifically, we focus on a practical situation of MEMG where the distribution of random factors is not known exactly, while a limited set of empirical data is available. To hedge against potential operational risks, a data-driven  distributionally robust dispatch (DRD) formulation is proposed based on two-stage DRO. The first-stage problem determines the day-ahead pre-dispatch schedule, and the second-stage, adapting to the realizations of random factors, optimizes the intra-day re-dispatch decisions. A continuous \textit{Wasserstein ambiguity set} is built, containing all probability distributions around the \textit{empirical distribution}, to support data-driven decision-making. By analyzing the DRD model from the \textit{primal} perspective, we propose a novel reformulation of the worst-case expectation problem. It inspires us to design a column generation (CG)-based solution algorithm that guarantees finite-step convergence and facilitates parallel computing. Then, integrating CG as an inner subroutine in the classical C\&CG framework, an exact and high-efficient decomposition algorithm is developed and then customized according to the specifications of MEMG, referred to as \textit{C\&CG-DRO(CG)}, which demonstrates a superior capacity in computing the complete DRD problem. In comparison to the literature, the contributions of this paper can be summarized as follows:
\begin{enumerate}
	\item A two-stage adaptive DRD formulation is proposed for MEMG, incorporating both supply and demand uncertainties. A convex Wasserstein ambiguity set containing all probability distributions around an empirical distribution is employed to achieve data-driven decision-making. 
	\item To overcome the computational challenges, by fully leveraging the special sturcture of worst-case expectation from the primal perspective, a C\&CG-DRO(CG) algorithm with a feature of parallel computation is customized and developed to derive an exact and high-efficient solution to our DRD problem.
	\item Numerical studies demonstrate the advantages of our DRD approach over traditional SP- and RO-based dispatch approaches, elucidate their interrelationships, and verify the dominance of the computational power of C\&CG-DRO(CG) over those of existing methodologies.
\end{enumerate}

The remainder of this paper is organized as follows. Section \ref{formulation} proposes the DRD formulation for a typical MEMG based on two-stage DRO. A CG-based algorithm is customized in Section \ref{wcep-cg} for the worst-case expectation problem, integrating which C\&CG-DRO(CG) is developed in Section \ref{ccg}. Then, numerical studies are carried out in Section \ref{case} to demonstrate the effectiveness of our model and algorithms. Finally, in Section \ref{conclusion}, some conclusions are drawn. 

Unless explicitly noted otherwise, in the paper, DRO refers specifically to two-stage DRO. All vectors are column vectors.  $[N]$ represents the integer set of $\{1,\cdots,N\}$.

\section{Problem Formulation}\label{formulation}

We consider a typical MEMG as illustrated in Fig. \ref{fig1}. The MEMG is composed of RE units, including wind turbines (WTs) and photovoltaic (PV) systems, battery storage systems (BSSs), electrolyzers (ELZs), hydrogen tanks (HTs), fuel cells (FCs), and hot water tanks (HWTs). Our DRD approach possesses a two-stage framework. In the first-stage, the day-ahead pre-dispatch schedule is optimized under a forecasted basic scenario. Then, the second-stage re-dispatch problem is introduced to access how the pre-dispatch scheme can respond to varying operating conditions for intra-day operations. Moreover, the operating horizon (i.e., one day) is uniformly divided into $T$ slots, each lasting $\Delta_t$ units. 

\begin{figure}[]
	\centering
	\includegraphics[width=0.4\textwidth]{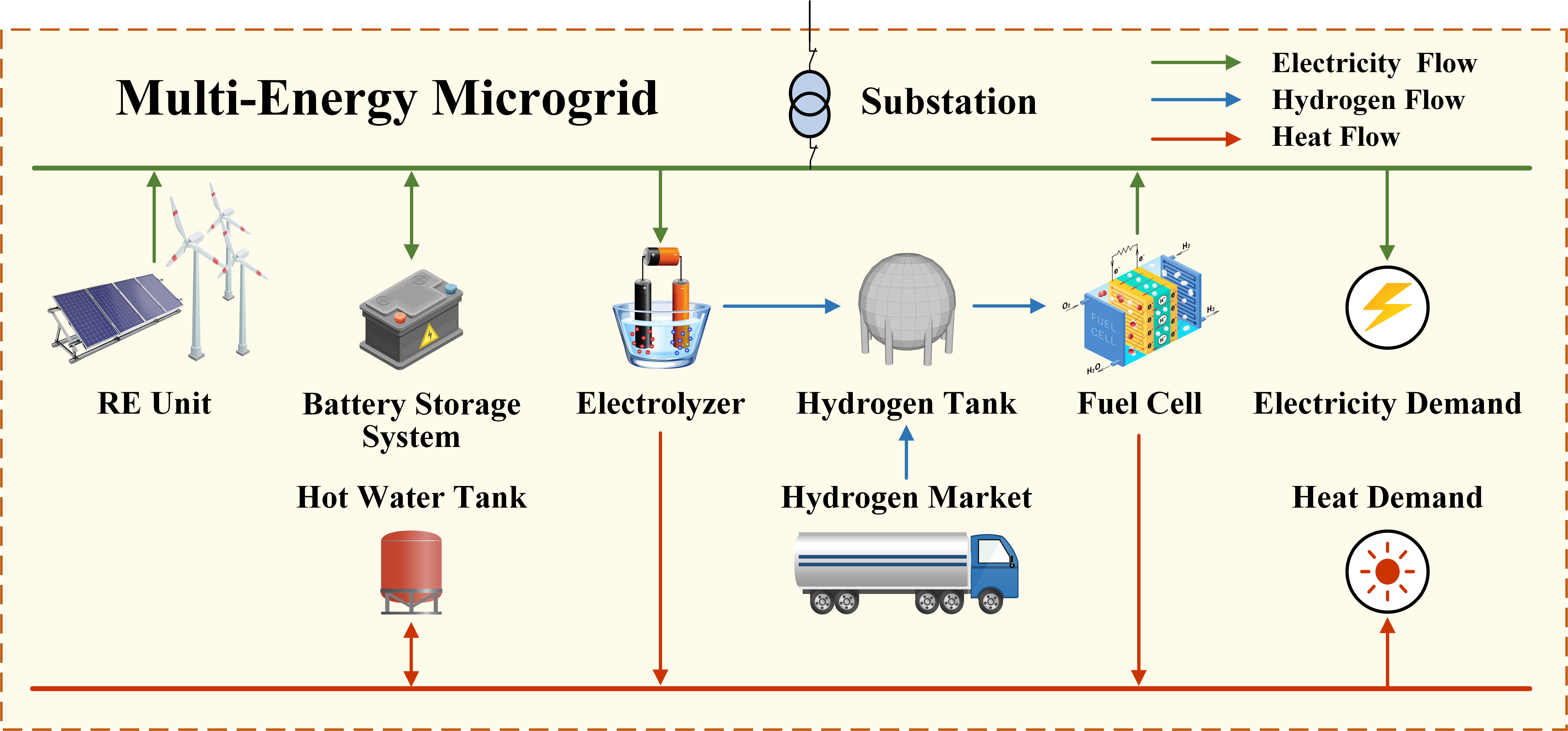}
	\vspace{-5pt}
	\caption{Typical structure of multi-energy microgrid.}
	\vspace{-15pt}
	\label{fig1}
\end{figure}

\subsection{Stage-Wise Dispatch Formulation}
\subsubsection{Pre-Dispatch Stage}
Pre-dispatch aims to minimize the day-ahead operational cost under a basic scenario from prediction. The objective function is taken as:
\setlength{\arraycolsep}{-0.1em}
\begin{eqnarray}
	&&\nonumber \textstyle\sum_{t\in[T]}\big(c_{\rm e,buy}^t p_{\rm buy}^t-c_{\rm e,sell}^tp_{\rm sell}^t\big)\Delta_t+c_{\rm g,buy}h_{\rm buy}^t\\
	&&\nonumber \quad+\;\big[\frac{1}{2}c_{\rm bss}^{\rm{deg}}\left(p_{\rm{bss,c}}^t+p_{\rm{bss,d}}^t\right)+c^{\rm om}_{\rm elz}p_{\rm elz}^{t}+c_{\rm fc}^{\rm{om}}p_{\rm fc}^{t}\big]\Delta_t\\
	&&\nonumber \quad+\left( c_{\rm elz,c}^{\rm su}y_{\rm cold}^{t}+c_{\rm elz,c}^{\rm sd}z_{\rm cold}^{t}+c_{\rm elz,w}^{\rm su}y_{\rm warm}^{t}+c_{\rm elz,w}^{\rm sd}z_{\rm warm}^{t}\right)\\
	&& \quad+ \left(c^{\rm su}_{\rm fc}y_{\rm fc}^{t}+c^{\rm sd}_{\rm fc}z_{\rm fc}^{t}\right)\label{obj_1stg}
\end{eqnarray}

\noindent In the first line, the first term includes both purchase cost and revenue from electricity transactions, and the second term accounts for hydrogen procurement expenses. The second line incorporates the degradation cost of BSS, as well as the operation and maintenance (O\&M) costs of ELZ and FC. The third and fourth lines represent, respectively, the startup and shutdown costs of ELZ and FC.

The corresponding operational constraints for pre-dispatch are shown below. \eqref{cons-1stg-1} provides RE outputs. \eqref{cons-1stg-2}--\eqref{cons-1stg-6} show the restriction on BSS's operation. Specifically, \eqref{cons-1stg-2}--\eqref{cons-1stg-4} are constraints for the charging and dischaging power of BSS, as well as its state-of-charge. \eqref{cons-1stg-5} is the dynamics equation. \eqref{cons-1stg-6} indicates that the net charging capacities of BSS should be zero after a daily charging cycle. \eqref{cons-1stg-7} and \eqref{cons-1stg-8} impose limitations on electricity transactions. 
\setlength{\arraycolsep}{-0.3em}
\begin{eqnarray}	
	&& p_{\rm wt}^t= \phi_{\rm wt}^t \overline{P}_{\rm wt},\;p_{\rm pv}^t= \phi_{\rm pv}^t \overline{P}_{\rm pv},\quad\forall t\in[T]\label{cons-1stg-1}\\
	&& 0\leq p_{{\rm bss,c}}^t\leq \overline{P}_{\rm bss}u_{\rm bss}^t,\quad \forall t\in[T]\label{cons-1stg-2}\\
	&& 0\leq p_{{\rm bss,d}}^t\leq \overline{P}_{\rm bss}\left(1-u_{\rm bss}^t\right),\quad \forall t\in[T]\label{cons-1stg-3}\\
	&& \underline{E}_{\rm bss}\leq e_{\rm bss}^t\leq \overline{E}_{\rm bss},\quad \forall t\in[T]\label{cons-1stg-4}\\
	&& e_{\rm bss}^{t+1}=e_{\rm bss}^{t}+\left(p_{{\rm bss,c}}^t\eta_{{\rm bss,c}}-p_{{\rm bss,d}}^t/\eta_{{\rm bss,d}}\right)\Delta_t,\; \forall t\in[T]\label{cons-1stg-5}\\
	&& \textstyle\sum_{t\in[T]} \left(p_{{\rm bss,c}}^t\eta_{{\rm bss,c}}-p_{{\rm bss,d}}^t/\eta_{{\rm bss,d}}\right)\Delta_t=0 \label{cons-1stg-6}\\
	&& 0\leq p_{\rm buy}^t \leq \overline{P}_{\rm sub}u_{\rm e,buy}^t,\quad \forall t\in[T]\label{cons-1stg-7}\\
	&& 0\leq p_{\rm sell}^t \leq \overline{P}_{\rm sub}\left(1-u_{\rm e,buy}^t\right),\quad \forall t\in[T]\label{cons-1stg-8}
\end{eqnarray}

\begin{figure}[]
	\centering
	\includegraphics[width=0.4\textwidth]{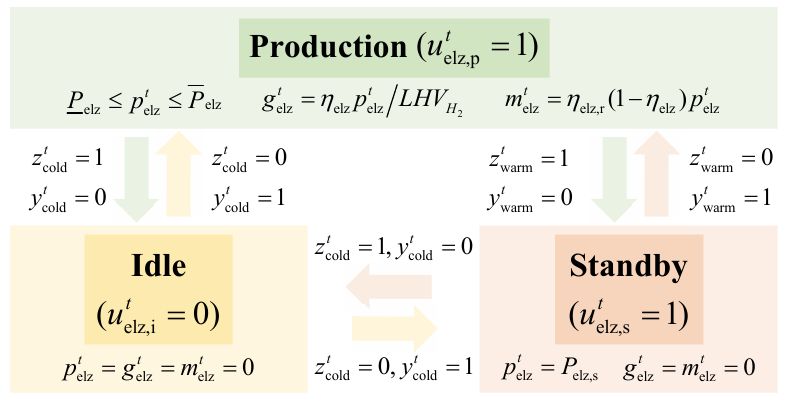}
	\vspace{-5pt}
	\caption{Three-state model for electrolyzer.}
	\label{3-state}
	\vspace{-15pt}
\end{figure}

\noindent\eqref{cons-1stg-9}--\eqref{cons-1stg-18} describe ELZ's operaion. We adopt a three-state model, including  production, standby, and idle states \cite{jia2025decentralized}, rather than the traditional two-state one that only has the production and idle, as illustrated in Fig. \ref{3-state}. Specifically, \eqref{cons-1stg-9} portrays the relationship among the mutually exclusive states. \eqref{cons-1stg-10}--\eqref{cons-1stg-11} capture ELZ's cold startup and shutdown. Of note, there is a time delay, denoted by $\tau_{\rm cold}$, for the cold startup to ensure all ELZ's components reaching the required temperature, pressure, and other working conditions from the complete turn-off (idle) state. \eqref{cons-1stg-12}--\eqref{cons-1stg-13} and \eqref{cons-1stg-14}--\eqref{cons-1stg-15} confine, respectively, the warm startup and shutdown. We mention that the warm-startup procedure is much faster comparing to the cold one, as an ELZ in the standby state maintains its necessary operating conditions with a small electric power input $P_{\rm elz,s}$. Considering the power-to-hydrogen-and-heat (P2HH) procedure, \eqref{cons-1stg-16}--\eqref{cons-1stg-16-2}, \eqref{cons-1stg-17}, and \eqref{cons-1stg-18} restrict electric power input, as well as hydrogen and heat outputs, respectively. 
\setlength{\arraycolsep}{-1.4em}
\begin{eqnarray}
	&& u_{\rm elz,p}^{t}+u_{\rm elz,s}^{t}=u_{\rm elz,i}^{t},\quad \forall t\in[T] \label{cons-1stg-9}\\
	&& u_{\rm elz,i}^{t}-u_{\rm elz,i}^{t-1}=y_{\rm cold}^{t-\tau_{\rm cold}/\Delta_t}-z_{\rm cold}^{t},\quad\forall t\in[T] \label{cons-1stg-10}\\
	&& y_{\rm cold}^{t-\tau_{\rm cold}}+z_{\rm cold}^{t}\leq 1,\quad \forall t\in[T]\label{cons-1stg-11}\\
	&& y_{\rm warm}^{t}\geq u_{\rm elz,s}^{t-1}+u_{\rm elz,p}^{t}-1,\quad\forall t\in[T]\label{cons-1stg-12}\\
	&& y_{\rm warm}^{t}\leq u_{\rm elz,s}^{t-1},\;y_{\rm warm}^{t}\leq u_{\rm elz,p}^{t},\quad\forall t\in[T]\label{cons-1stg-13}\\
	&& z_{\rm warm}^{t}\geq u_{\rm elz,s}^{t}+u_{\rm elz,p}^{t-1}-1,\quad\forall t\in[T]\label{cons-1stg-14}\\
	&& z_{\rm warm}^{t}\leq u_{\rm elz,s}^{t},\;z_{\rm warm}^{t}\leq u_{\rm elz,p}^{t-1},\quad\forall t\in[T]\label{cons-1stg-15}\\
	&& p_{\rm elz}^{t}=p_{\rm elz,p}^{t}+p_{\rm elz,s}^{t},\quad \forall t\in[T]\label{cons-1stg-16}\\
	&& \underline{P}_{\rm elz}u_{\rm elz,p}^{t}\leq p_{\rm elz,p}^{t}\leq \overline{P}_{\rm elz}u_{\rm elz,p}^{t},\quad \forall t\in[T]\label{cons-1stg-16-1}\\
	&& p_{\rm elz,s}^{t}=P_{\rm elz,s} u_{\rm elz,s}^{t},\quad \forall t\in[T]\label{cons-1stg-16-2}\\
	&& g_{\rm elz}^{t}=\eta_{\rm elz}p_{\rm elz,p}^{t}/LHV_{\rm H_2},\quad\forall t\in[T]\label{cons-1stg-17}\\
	&& m_{\rm elz}^{t}=\eta_{{\rm elz},{\rm r}}(1-\eta_{\rm elz})p_{\rm elz,p}^{t},\quad\forall t\in[T]\label{cons-1stg-18}
\end{eqnarray}
Further operational equations and constraints are given by \eqref{cons-1stg-19}--\eqref{cons-1stg-end} below. \eqref{cons-1stg-19}--\eqref{cons-1stg-20} show the restrictions on hydrogen storage, capturing the mass dynamics and capacity limits of HT. Hydrogen procurement is subject to \eqref{cons-1stg-21}--\eqref{cons-1stg-22}. As a hydrogen-driven combined-heat-and-power device, \eqref{cons-1stg-23} describes FC's hydrogen-to-power process. Also, its hydrogen and heat outputs are governed, respectively, by \eqref{cons-1stg-24} and \eqref{cons-1stg-25}.  FC's startup and shutdown are logically determined by \eqref{cons-1stg-26} and \eqref{cons-1stg-27}. Similar to \eqref{cons-1stg-19}--\eqref{cons-1stg-20}, \eqref{cons-1stg-28}--\eqref{cons-1stg-29} are constraints for HWT. Finally, \eqref{cons-1stg-30} and \eqref{cons-1stg-end} characterize heat and electric power balances, respectively.
\setlength{\arraycolsep}{-0.5em}
	\begin{eqnarray}
	&&  h_{\rm ht}^{t+1}=(1-\nu_{\rm ht})h_{\rm ht}^{t}+\left(g_{{\rm elz}}^{t}-g_{{\rm fc}}^{t}\right)\Delta_t+h_{\rm buy}^t,\; \forall t\in[T]\label{cons-1stg-19}\\
	&& 0\leq h_{\rm ht}^t\leq \overline{H}_{\rm ht},\quad \forall t\in[T]\label{cons-1stg-20}\\
	&& 0\leq h_{\rm buy}^t\leq \overline{H}_{\rm buy}u_{\rm g,buy}^t,\quad \forall t\in[T]\label{cons-1stg-21}\\
	&& \textstyle\sum_{t\in[T]}u_{\rm g,buy}^t\leq \overline{U}_{\rm g,buy}\label{cons-1stg-22}\\
	&& p_{\rm fc}^{t}=\eta_{\rm fc}LHV_{\rm H_2}g_{\rm fc}^{t},\quad \forall t\in[T]\label{cons-1stg-23}\\
	&& \underline{P}_{\rm fc}u_{\rm fc}^{t}\leq p_{\rm fc}^{t}\leq \overline{P}_{\rm fc}u_{\rm fc}^{t},\quad\forall t\in[T]\label{cons-1stg-24}\\
	&& m_{\rm fc}^{t}=\eta_{\rm fc,r}(1-\eta_{\rm fc})p_{\rm fc}^{t}/\eta_{\rm fc},\quad \forall t\in[T]\label{cons-1stg-25}\\
	&& y_{\rm fc}^{t}=\max\{u_{\rm fc}^{t}-u_{\rm fc}^{t-1},\;0\},\quad\forall t\in[T]\label{cons-1stg-26}\\
	&& z_{\rm fc}^{t}= \max\{u_{\rm fc}^{t-1}-u_{\rm fc}^{t},\;0\},\quad\forall t\in[T]\label{cons-1stg-27}\\
	&& n_{\rm hwt}^{t+1}=(1-\nu_{\rm hwt})n_{\rm hwt}^{t}+m_{\rm hwt}^t\Delta_t,\quad \forall t\in[T]\label{cons-1stg-28}\\
	&&0\leq n_{\rm hwt}^t\leq \overline{N}_{\rm hwt},\quad\forall t\in[T]\label{cons-1stg-29} \\
	&& m_{\rm elz}^t+m_{\rm fc}^t=\phi_{\rm hd}^t M_{\rm d}+m_{\rm hwt}^t,\quad \forall t\in[T] \label{cons-1stg-30} \\
	&& \nonumber p_{\rm wt}^t+p_{\rm pv}^t+\left(p_{\rm{bss,d}}^t-p_{\rm{bss,c}}^t\right)+{p_{{\rm fc}}^{t}}+\left(p_{\rm buy}^t-p_{\rm sell}^t\right)\\
	&& \quad\qquad\qquad\qquad\qquad\qquad={p_{{\rm elz}}^{t}}+\phi^t_{\rm ed} P_{\rm d},\quad \forall t\in[T]\label{cons-1stg-end}
\end{eqnarray}
\begin{remark}\label{linearizaton}
	Given that cost coefficients for binary variables $y_{\rm fc}^t$ and $z_{\rm fc}^t$ are positive in \eqref{obj_1stg}, \eqref{cons-1stg-26}--\eqref{cons-1stg-27} can be linearized to yield
	\begin{align}
	\nonumber y_{\rm fc}^{t}\geq u_{\rm fc}^{t}-u_{\rm fc}^{t-1},\;\; z_{\rm fc}^{t}\geq u_{\rm fc}^{t-1}-u_{\rm fc}^{t},\quad \forall t\in[T].
	\end{align}
\end{remark}

\subsubsection{Re-Dispatch Stage}
The re-dispatch mainly concerns the adjustment operations of MEMG's flexible resources in the future, based on some fixed pre-dispatch decisions and a certain realized operating condition. Its target is to minimize the potential intra-day operational cost:
\setlength{\arraycolsep}{-0.1em}
\begin{eqnarray}
	&&\nonumber  \textstyle\sum_{t\in[T]}\Big[(c_{\rm e,buy}^t \Delta{p}_{\rm buy}^t-c_{\rm e,sell}^t\Delta{p}_{\rm sell}^t)\\
	&&\nonumber \quad+~\frac{1}{2}c_{\rm bss}^{\rm{deg}}\!\left(\Delta{p}_{\rm{bss,c}}^t+\Delta{p}_{\rm{bss,d}}^t\right)+ c^{\rm om}_{\rm elz}\Delta{p}_{\rm elz}^{t}+ c^{\rm om}_{\rm fc}\Delta{p}_{\rm fc}^{t}\\
	&&\quad+~{\iota}_{\rm e}\tilde{p}_{\rm loss}^t+{\iota}_{\rm h}\tilde{m}_{\rm loss}^t \Big]\Delta_t.
\end{eqnarray}		
The first line gives the deviation cost of electricity transactions. The three terms in the second line represent, respectively, the degradation cost of BS, and O\&M costs of ELZ and FC, all incurred due to adjustment. As indicated in the third line, we additionally consider the compensation cost of both unmet electricity and heat demand.

The microgrid's re-dispatch constraints are given below:
\setlength{\arraycolsep}{-0.4em}
\begin{eqnarray}	
	&& \Delta p_{\rm bs,c}^t=\tilde{p}_{\rm bs,c}^t-p_{\rm bs,c}^t,\;\Delta p_{\rm bs,d}^t=\tilde{p}_{\rm bs,d}^t-p_{\rm bs,d}^t,\; \forall t\in[T]\label{cons-2stg-1} \\
	&& \Delta p_{\rm elz}^{t}=\tilde{p}_{\rm elz}^{t}-p_{\rm elz}^{t},\quad \forall t\in[T]\label{cons-2stg-2} \\	
	&& \Delta p_{\rm fc}^{t}=\tilde{p}_{\rm fc}^{t}-p_{\rm fc}^{t},\quad \forall t\in[T]\label{cons-2stg-3} \\
	&& \Delta p_{\rm sell}^t=\tilde{p}_{\rm sell}^t-p_{\rm sell}^t,\;\Delta p_{\rm buy}^t=\tilde{p}_{\rm buy}^t-p_{\rm buy}^t,\quad \forall t\in[T]\label{cons-2stg-4} \\
	&& 0\leq \tilde{p}_{\rm wt}^t\leq \tilde{\phi}_{\rm wt}^t P_{\rm wt},\;0\leq \tilde{p}_{\rm pv}^t\leq \tilde{\phi}_{\rm pv}^t P_{\rm pv},\quad\forall t\in[T]\label{cons-2stg-5}\\
	&& 0\leq \tilde{p}_{{\rm bss,c}}^t,\;\tilde{p}_{{\rm bss,d}}^t\leq P_{\rm bss},\quad \forall t\in[T]\label{cons-2stg-6}\\
	&& \tilde{e}_{\rm bss}^{t+1}=\tilde{e}_{\rm bss}^{t}\!+\!\left(\tilde{p}_{{\rm bss,c}}^t\eta_{{\rm bss,c}}-\tilde{p}_{{\rm bss,d}}^t/\eta_{{\rm bss,d}}\right)\Delta_t,\; \forall t\in[T]\label{cons-2stg-7}\\
	&& \underline{E}_{\rm bss}\leq \tilde{e}_{\rm bss}^t\leq E_{\rm bss},\quad \forall t\in[T]\label{cons-2stg-8}\\
	&& \textstyle\sum_{t\in[T]} \left(\tilde{p}_{{\rm bss,c}}^t\eta_{{\rm bss,c}}-\tilde{p}_{{\rm bss,d}}^t/\eta_{{\rm bss,d}}\right)\Delta_t=0\label{cons-2stg-9} \\
	&& p_{\rm buy}^t(1-{\delta}_{\rm buy})\leq \tilde{p}_{\rm buy}^t\leq p_{\rm buy}^t(1+{\delta}_{\rm buy}),\quad \forall t\in[T]\label{cons-2stg-10}\\
	&& p_{\rm sell}^t(1-{\delta}_{\rm sell})\leq \tilde{p}_{\rm sell}^t\leq p_{\rm sell}^t(1+{\delta}_{\rm sell}),\quad \forall t\in[T]\label{cons-2stg-11}\\
	&& 0\leq \tilde{p}_{\rm buy}^t,\;\tilde{p}_{\rm sell}^t\leq \overline{P}_{\rm sub},\quad \forall t\in[T]\label{cons-2stg-12}\\
	&& \tilde{p}_{\rm elz}^{t}=\tilde{p}_{\rm elz,p}^{t}+\tilde{p}_{\rm elz,s}^{t},\quad \forall t\in[T]\label{cons-2stg-13}\\
	&& \underline{P}_{\rm elz}u_{\rm elz,p}^{t}\leq\tilde{p}_{\rm elz,p}^{t}\leq\overline{P}_{\rm elz}u_{\rm elz,p}^{t},\quad \forall t\in[T]\\
	&& \tilde{p}_{\rm elz,s}^{t}=P_{\rm elz,s} u_{\rm elz,s}^{t},\quad \forall t\in[T]\\
	&& \tilde{g}_{\rm elz}^{t}=\eta_{\rm elz}\tilde{p}_{\rm elz,p}^{t}/LHV_{\rm H_2},\quad \forall t\in[T]\label{cons-2stg-14}\\
	&& \tilde{m}_{\rm elz}^{t}=\eta_{{\rm elz},{\rm r}}(1-\eta_{\rm elz})\tilde{p}_{\rm elz,p}^{t},\quad\forall t\in[T]\label{cons-2stg-15}\\
	&& \tilde{h}_{\rm ht}^{t+1}=(1-\nu_{\rm ht})\tilde{h}_{\rm ht}^{t}+\left(\tilde{g}_{{\rm elz}}^{t}-\tilde{g}_{{\rm fc}}^{t}\right)\Delta_t+h_{\rm buy}^t,\; \forall t\in[T]\label{cons-2stg-16}\\
	&& 0\leq \tilde{h}_{\rm ht}^t\leq \overline{H}_{\rm ht},\quad \forall t\in[T]\label{cons-2stg-17}\\
	&& \tilde{p}_{\rm fc}^{t}=\eta_{\rm fc}LHV_{\rm H_2}\tilde{g}_{\rm fc}^{t},\quad \forall t\in[T]\label{cons-2stg-18}\\
	&& \underline{P}_{\rm fc}u_{\rm fc}^{t}\leq \tilde{p}_{\rm fc}^{t}\leq \overline{P}_{\rm fc}u_{\rm fc}^{t},\quad \forall t\in[T]\label{cons-2stg-21}\\
	&& \tilde{m}_{\rm fc}^{t}=\eta_{\rm fc,r}(1-\eta_{\rm fc})\tilde{p}_{\rm fc}^{t}/\eta_{\rm fc},\quad \forall t\in[T]\label{cons-2stg-22}\\
	&& \tilde{n}_{\rm hwt}^{t+1}=(1-\nu_{\rm hwt})\tilde{n}_{\rm hwt}^{t}+\tilde{m}_{\rm hwt}^t\Delta_t,\quad \forall t\in[T]\label{cons-2stg-23}\\
	&&0\leq \tilde{n}_{\rm hwt}^t\leq \overline{N}_{\rm hwt},\quad\forall t\in[T] \label{cons-2stg-24}\\
	&& \tilde{m}_{\rm elz}^t+\tilde{m}_{\rm fc}^t=\left(\tilde{\phi}_{\rm hd}^t{M}_{\rm d}-\tilde{m}_{\rm loss}^t\right)+\tilde{m}_{\rm hwt}^t,\quad \forall t\in[T]\label{cons-2stg-24-1} \\
	&& 0\leq \tilde{m}_{\rm loss}^t\leq \tilde{\phi}_{\rm hd}^t{M}_{\rm d},\quad \forall t\in[T]\label{cons-2stg-25}\\
	&& \nonumber \tilde{p}_{\rm wt}^t+\tilde{p}_{\rm pv}^t+\left(\tilde{p}_{\rm{bs,d}}^t-\tilde{p}_{\rm{bs,c}}^t\right)+{\tilde{p}_{{\rm fc}}^{t}}+\left(\tilde{p}_{\rm buy}^t-\tilde{p}_{\rm sell}^t\right)\\
	&& \qquad\qquad\qquad\qquad={\tilde{p}_{{\rm elz}}^{t}}+(\tilde{\phi}^t_{\rm ed}{P}_{\rm d}-\tilde{p}_{\rm loss}^t),\quad \forall t\in[T]\label{cons-2stg-26} \\
	&& 0\leq \tilde{p}_{\rm loss}^t\leq \tilde{\phi}^t_{\rm ed}{P}_{\rm d},\quad \forall t\in[T].\label{cons-2stg-end}
\end{eqnarray}
In the above,
\eqref{cons-2stg-1}--\eqref{cons-2stg-4} define some deviations between re- and pre-dispatch decisions. Similar to the pre-dispatch problem, \eqref{cons-2stg-5}, \eqref{cons-2stg-6}--\eqref{cons-2stg-9}, \eqref{cons-2stg-13}--\eqref{cons-2stg-15},  \eqref{cons-2stg-16}--\eqref{cons-2stg-17}, \eqref{cons-2stg-18}--\eqref{cons-2stg-22}, and \eqref{cons-2stg-23}--\eqref{cons-2stg-24} are  constraints for RE units, BSS, ELZ, HT, FC, and HWT, respectively, as re-dispatch counterparts. Note that RE curtailment is allowed in \eqref{cons-2stg-5}, and we equivalently do not include the state variables for BSS in \eqref{cons-2stg-6} aligned with the ideas in \cite{10582313}. \eqref{cons-2stg-10}--\eqref{cons-2stg-12} confine the electricity transactions in the intra-day market, allowing some adjustments on top of the day-ahead contract. By introducing $\tilde{m}_{\rm loss}^t$ in \eqref{cons-2stg-25} and $\tilde{p}_{\rm loss}^t$ in \eqref{cons-2stg-end} to quantify unmet heat and electricity demand, respectively, \eqref{cons-2stg-24-1} and \eqref{cons-2stg-26} individually express the heat and electric power balance.

\subsection{Integrated Formulation based on DRO}

We integrate the aforementioned two problems into a single mathematical program, using the re-dispatch one to evaluate the quality of the pre-dispatch decisions. Nevertheless, the re-dispatch stage confronts multiple uncertainties, including intermittent RE outputs (i.e., $\tilde{\phi}_{\rm wt}^t$ and $\tilde{\phi}_{\rm pv}^t$) as well as random electricity and heat demand (i.e., $\tilde{\phi}_{\rm ed}^t$ and $\tilde{\phi}_{\rm hd}^t$). To hedge against these operational risks, the two-stage DRO approach is adopted, resulting in the following \textbf{DRD-MEMG} problem. The goal of \textbf{DRD-MEMG} is to determine the optimal pre-dispatch strategy to minimize the sum of pre-dispatch cost and the worst-case expectation of re-dispatch cost. For ease of exposition, we express it in the compact form below. 
\begin{align}
\hspace{-2mm}
{\textbf{DRD-MEMG}}:~	\mathit{w} =&\; \min_{\mathbf{x}} ~\mathbf{c}^{\intercal}\mathbf{x}+\max_{\mathbb{P}\in\mathcal{P}}~\mathbb{E}_{\mathbb{P}}[Q(\mathbf{x},\bm{\xi})]  \label{1stg-1} \\ 
	\mathrm{s.t.} ~ &
	\mathbf{A}\mathbf{x}\leq \mathbf{b},\quad \mathbf{x}\in\left\lbrace 0,1\right\rbrace ^{n_{\rm x}}\times\mathbb{R}_{+}^{m_{\rm x}} \label{1stg-3}
\end{align}
In the above, $\mathbf{x}$ is the first-stage decision vector that represents the here-and-now pre-dispatch decisions. \eqref{1stg-3} corresponds to constraints \eqref{cons-1stg-1}--\eqref{cons-1stg-end}. By collecting all uncertain parameters, random vector $\bm{\xi}=(\xi^1,\cdots,\xi^{m_{\xi}})$ is defined on a measurable space $(\Xi,\mathcal{F})$ with an (unknown) probability distribution $\mathbb{P}$. $\Xi\triangleq\times_{i=1}^{m_{\xi}}[\underline{\xi}^i,\overline{\xi}^i]\subseteq\mathbb{R}^{m_{\xi}}$ is a box-type \textit{sample space} that can be obtained from historical data or expert knowledge, and $\mathcal{F}$ is a $\sigma$-algebra of $\Xi$. Further, we define $\overline{\bm{\xi}}\triangleq(\overline{\xi}^1,\cdots,\overline{\xi}^{m_{\xi}})$ and $\underline{\bm{\xi}}\triangleq(\underline{\xi}^1,\cdots,\underline{\xi}^{m_{\xi}})$. 
$\mathcal{P}$ denotes the ambiguity set. $Q(\mathbf{x},\bm{\xi})$ is the recourse value given $\mathbf{x}$ and $\bm{\xi}$:
\begin{align}
	Q(\mathbf{x},\bm{\xi})=\min_{\mathbf{y}}&~\mathbf{d}^{\intercal}\mathbf{y}\label{2stage-1}\\
	{\rm s.t.}&~\mathbf{F}\mathbf{y}\leq\mathbf{h}-\mathbf{G}\mathbf{x}-\mathbf{K}\bm{\xi},\quad\mathbf{y}\in\mathbb{R}_{+}^{m_{\rm y}},\label{2stage-3}
\end{align}
where $\mathbf{y}$ denotes the wait-and-see decision vector in the second-stage for HM's re-dispatch, and \eqref{2stage-3} corresponds to constraints \eqref{cons-2stg-1}--\eqref{cons-2stg-end}. Finally, we use $\mathbb{E}_{\mathbb{P}}[Q(\mathbf{x},\bm{\xi})]$ to denote the expectation of $Q(\mathbf{x},\bm{\xi})$ with respect to the distribution $\mathbb{P}$. \vspace{-5pt}

\begin{remark}
	Due to the inclusion of $\tilde{p}_{\rm loss}^t$ and $\tilde{m}_{\rm loss}^t$, which mathematically serve as non-negative auxiliary variables, \textbf{DRD-MEMG} has \textit{relatively complete recourse}, i.e., for all $\mathbf{x}$ satisfying \eqref{1stg-3}, and for all $\bm{\xi}\in\Xi$, the recourse problem is always feasible and  $|Q(\mathbf{x},\bm{\xi})|<\infty$. \vspace{-5pt}
\end{remark}

Given $S$ (training) samples $\{\bm{\xi}^{\rm e}_s\}_{s=1}^S$ of $\bm{\xi}$, we consider the Wasserstein ambiguity set for data-driven decision-making:
\begin{align}
	\mathcal{P}=\Big\{\mathbb{P}\in\mathcal{M}\left(\Xi,\mathcal{F}\right)\;:\;\mathfrak{d}_{\rm w}\left({\mathbb{P}}, \mathbb{P}^{\rm e}\right)\leq r\Big\}.\label{ambiguity-w}
\end{align}
Here, $\mathcal{M}(\Xi,\mathcal{F})$ is the set of all probability distributions on the measurable space $(\Xi,\mathcal{F})$. Further, $\mathbb{P}^{\rm e}\triangleq\sum_{s=1}^S\pi_s^{\rm e}\delta_{{\bm{\xi}}_s^{\rm e}}$ denotes the empirical distribution on the samples, where $\delta_{{\bm{\xi}}_s^{\rm e}}$ is a Dirac measure concentrating unit mass at ${\bm{\xi}}_s^{\rm e}$, and $\pi_s^{\rm e}$ is ${\bm{\xi}}_s^{\rm e}$'s probability satisfying $\sum_{s=1}^S\pi_s^{\rm e}=1$. $\mathfrak{d}_{\rm w}\left({\mathbb{P}}, \mathbb{P}^{\rm e}\right)$ is the Wasserstein metric between distributions $\mathbb{P}$ and $\mathbb{P}^{\rm e}$:
\begin{align}
	\hspace{-1mm}
	\mathfrak{d}_{\rm w}\left({\mathbb{P}}, \mathbb{P}^{\rm e}\right)&=\min_{\Pi\in\mathcal{S}(\mathbb{P},\mathbb{P}^{\rm e})}\textstyle\iint_{\Xi\times\Xi}\left\|\bm{\xi}-\bm{\zeta} \right\| \Pi(\mathrm{d}\bm{\xi},\mathrm{d}\bm{\zeta})\label{w-metric-1}\\
	&=\min_{\mathbb{P}_s\in\mathcal{M}(\Xi,\mathcal{F})}\textstyle\sum_{s=1}^S\pi_s^{\rm e}\int_{\Xi}\left\|\bm{\xi}-\bm{\xi}_s^{\rm e} \right\|\mathbb{P}_s(\mathrm{d}\bm{\xi}).\label{w-metric-2}
\end{align}
In \eqref{w-metric-1}, $\mathcal{S}(\mathbb{P},\mathbb{P}^{\rm e})$ denotes the set of all joint distributions of $\bm{\xi}$ and $\bm{\zeta}$ with marginals $\mathbb{P}$ and $\mathbb{P}^{\rm e}$, respectively; $\left\|\cdot \right\|$ is an arbitrary norm on $\mathbb{R}^{m_{\rm \xi}}$, which is chosen as the $L_1$ norm in our work, i.e., $\left\|\bm{\xi}-\bm{\zeta} \right\|_1=\sum_{i=1}^{m_{\xi}}|\xi_i-\zeta_i|$. \eqref{w-metric-2} is obtained by virtue of $\bm{\xi}$'s conditional distribution $\mathbb{P}_s$ given $\bm{\zeta}=\bm{\xi}^{\rm e}_s$. Obiviously, $\mathbb{P}=\sum_{s=1}^S\pi_s\mathbb{P}_s$. The ambiguity set $\mathcal{P}$ can be viewed as a Wasserstein ball centered at the empirical distribution $\mathbb{P}^{\rm e}$ \cite{mohajerin2018data}, where the pre-defined parameter  $r\geq 0$ is its \textit{radius}. We mention that $r$ is also referred to as the level of robustness \cite{rahimian2022frameworks}, and the result below can be easily derived. \vspace{-5pt}
\begin{prop}\label{prop-r}
	For $r_1<r_2$, i.e., $\mathcal{P}_1\subseteq\mathcal{P}_2$, we have 
	$w_1\leq w_2$.\vspace{-5pt}
\end{prop}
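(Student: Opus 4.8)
The plan is to exploit the elementary monotonicity of the value of an optimization problem with respect to enlarging a feasible set, applied twice: once to the inner worst-case expectation (a maximization over the ambiguity set) and once, in the opposite direction, to the outer minimization over $\mathbf{x}$. First I would verify the stated set inclusion. By definition \eqref{ambiguity-w}, $\mathcal{P}_i=\{\mathbb{P}\in\mathcal{M}(\Xi,\mathcal{F}):\mathfrak{d}_{\rm w}(\mathbb{P},\mathbb{P}^{\rm e})\leq r_i\}$ for $i=1,2$. Since $r_1<r_2$, every $\mathbb{P}$ with $\mathfrak{d}_{\rm w}(\mathbb{P},\mathbb{P}^{\rm e})\leq r_1$ also satisfies $\mathfrak{d}_{\rm w}(\mathbb{P},\mathbb{P}^{\rm e})\leq r_2$, so $\mathcal{P}_1\subseteq\mathcal{P}_2$.

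Next, I would fix an arbitrary first-stage decision $\mathbf{x}$ feasible for \eqref{1stg-3}, noting that this feasible region does not depend on the radius. Because $\mathcal{P}_1\subseteq\mathcal{P}_2$, maximizing the common objective $\mathbb{E}_{\mathbb{P}}[Q(\mathbf{x},\bm{\xi})]$ over the larger set can only raise its value, which gives
\[
\max_{\mathbb{P}\in\mathcal{P}_1}\mathbb{E}_{\mathbb{P}}[Q(\mathbf{x},\bm{\xi})]\;\leq\;\max_{\mathbb{P}\in\mathcal{P}_2}\mathbb{E}_{\mathbb{P}}[Q(\mathbf{x},\bm{\xi})].
\]
Adding the shared term $\mathbf{c}^{\intercal}\mathbf{x}$ to both sides and writing $g_i(\mathbf{x})\triangleq\mathbf{c}^{\intercal}\mathbf{x}+\max_{\mathbb{P}\in\mathcal{P}_i}\mathbb{E}_{\mathbb{P}}[Q(\mathbf{x},\bm{\xi})]$, this yields the pointwise domination $g_1(\mathbf{x})\leq g_2(\mathbf{x})$ for every feasible $\mathbf{x}$.

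Finally, I would pass to the outer minimization: since $g_1\leq g_2$ holds pointwise over the identical feasible set, taking the minimum of each side preserves the inequality, so that $w_1=\min_{\mathbf{x}}g_1(\mathbf{x})\leq\min_{\mathbf{x}}g_2(\mathbf{x})=w_2$. In this argument there is essentially no genuine obstacle; the only point that warrants a moment's care is that the outer minimization ranges over the \emph{same} feasible set in both instances, so that the enlargement of the ambiguity set enters solely through the inner maximization and cannot be reversed by the subsequent minimization over $\mathbf{x}$.
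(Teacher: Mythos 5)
Your proof is correct and is precisely the elementary monotonicity argument the paper has in mind when it states the result ``can be easily derived'' without giving a proof: enlarging the ambiguity set weakly increases the inner worst-case expectation pointwise in $\mathbf{x}$, and minimizing over the radius-independent feasible set \eqref{1stg-3} preserves the inequality. Your added remark that the outer feasible set is identical for both radii is exactly the right point of care, and nothing further is needed.
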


With the notations introduced above, we further define the worst-case expectation problem (\textbf{WCEP}) for a given $\mathbf{x}$ as follows:
\setlength{\arraycolsep}{-0.5em}
\begin{eqnarray}
	&&{\textbf{WCEP}}:\;v(\mathbf{x})\!=\!\max_{\mathbb{P}\in\mathcal{P}}\mathbb{E}_{\mathbb{P}}\left[Q(\mathbf{x},\bm{\xi})\right]\!=\!\max_{\mathbb{P}\in\mathcal{P}}\!\textstyle\int_{\Xi}\!Q(\mathbf{x},\bm{\xi})\mathbb{P}(\mathrm{d}\bm{\xi})\label{wcep-1}\\
	&&\quad\quad\;\;\;=\max_{\mathbb{P}_s\in\mathcal{M}(\Xi,\mathcal{F})}\textstyle\sum_{s=1}^S\pi_s^{\rm e}\int_{\Xi}Q(\mathbf{x},\bm{\xi})\mathbb{P}_s(\mathrm{d}\bm{\xi})\label{wcep-2}\\
	&&\quad\quad\quad\quad\quad\;\;\;{\rm s.t.}\quad\textstyle\sum_{s=1}^S\pi_s^{\rm e}\int_{\Xi}\left\|\bm{\xi}-\bm{\xi}_s^{\rm e} \right\|_1\mathbb{P}_s(\mathrm{d}\bm{\xi})\leq r\label{wcep-3}\\
	&&\quad\quad\;\;\;=\max_{\mathbb{P}_s\in\mathcal{M}(\Xi,\mathcal{F})}{\textstyle{\sum_{s=1}^S}}\textstyle\pi_s^{\rm e}\int_{\Xi}Q(\mathbf{x},\bm{\xi}_s)\mathbb{P}_s(\mathrm{d}\bm{\xi}_s)\label{wcep-4}\\
	&&\quad\quad\quad\quad\quad\;\;\;{\rm s.t.}\quad\textstyle\sum_{s=1}^S\pi_s^{\rm e}\int_{\Xi}\left\|\bm{\xi}_s-\bm{\xi}_s^{\rm e} \right\|_1\mathbb{P}_s(\mathrm{d}\bm{\xi}_s)\leq r.\label{wcep-5}
\end{eqnarray}
The fourth equality above is obtained by introducing $S$ independent replicas of $\bm{\xi}$, denoted by $\{\bm{\xi}_s\}_{s=1}^S$. To be explained in the next section, this transformation will help us accelerate the solution procedure. Note that \textbf{WCEP} in the form of \eqref{wcep-4}--\eqref{wcep-5} can be viewed as an infinite-dimensional linear program (LP). Following the result in \cite{lu2024}, we have the theorem below.\vspace{-5pt}

\begin{thm}\label{eq-summation}
	(Adapted from Corollary 4 in \cite{lu2024}) 
	For each $s\in[S]$, let $\Big\{\{\bm{\xi}_{sn}\}_{n=1}^{N_s},\{\pi_{sn}\}_{n=1}^{N_s}\Big\}$ denote a discrete distribution with scenarios $\{\bm{\xi}_{sn}\}_{n=1}^{N_s}$, and  $\{\pi_{sn}\}_{n=1}^{N_s}$ being the associated probabilities. Then, there exist S sequences of  $\{\bm{\xi}_{sn}\}_{n=1}^{N_s}$'s such that the following equivalence holds: 
	\setlength{\arraycolsep}{-0.5em}
	\begin{eqnarray}
		&&\nonumber v(\mathbf{x})=\lim_{\substack{N_s\rightarrow+\infty\\s\in[S]}}\max_{\substack{\{\{\bm{\xi}_{sn}\}_{n=1}^{N_s}}\}_{s=1}^S}\max_{\substack{\{\{\pi_{sn}\}_{n=1}^{N_s}}\}_{s=1}^S}\\
		&&\quad\qquad\qquad\qquad\qquad\textstyle\sum_{s=1}^S\sum_{n=1}^{N_s}\pi_s^{\rm e}Q(\mathbf{x},\bm{\xi}_{sn})\pi_{sn}\label{wcep-6}\\
		&&{\rm s.t.} ~ \textstyle \sum_{n=1}^{N_s}\pi_{sn}=1,\quad \forall s\in[S] \label{wcep-7}\\
		&&\quad\quad{\textstyle\sum_{s=1}^S} \textstyle\sum_{n=1}^{N_s}\pi_s^{\rm e}\left\|\bm{\xi}_{sn}-\bm{\xi}_s^{\rm e} \right\|_1\pi_{sn}\leq r
		\label{wcep-8}\\
		&&\quad\quad\bm{\xi}_{sn}\in\Xi,\;\;\pi_{sn}\in\mathbb{R}_+,\quad \forall n\in[N_s], \forall s\in[S].\label{wcep-9}
	\end{eqnarray}
\end{thm}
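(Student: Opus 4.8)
The plan is to read \textbf{WCEP} in the replica form \eqref{wcep-4}--\eqref{wcep-5} as an infinite-dimensional linear program whose decision variables are the $S$ probability measures $\{\mathbb{P}_s\}_{s=1}^S$ on the compact box $\Xi$. Both the objective $\sum_s \pi_s^{\rm e}\int_\Xi Q(\mathbf{x},\bm{\xi}_s)\mathbb{P}_s(\mathrm{d}\bm{\xi}_s)$ and the Wasserstein budget in \eqref{wcep-5} are \emph{linear} in these measures, and, apart from the $S$ normalization requirements $\mathbb{P}_s\in\mathcal{M}(\Xi,\mathcal{F})$, the measures are coupled only through the single budget constraint. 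Thus the program carries finitely many linear constraints over an infinite-dimensional measure space, which is exactly the setting to which Corollary 4 of \cite{lu2024} applies; the equivalence \eqref{wcep-6}--\eqref{wcep-9} is its specialization, where each $\mathbb{P}_s$ is represented as a finitely supported measure $\sum_{n=1}^{N_s}\pi_{sn}\delta_{\bm{\xi}_{sn}}$ and one lets the number of atoms grow. My proof would therefore reduce the statement to that corollary after verifying its structural hypotheses, and supply the two-sided argument that certifies the limit.

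The first substantive step is to record the regularity of the integrand $Q(\mathbf{x},\cdot)$. By relatively complete recourse (noted in the remark preceding the ambiguity set), the recourse LP \eqref{2stage-1}--\eqref{2stage-3} is feasible and finite for every $\bm{\xi}\in\Xi$ at any fixed first-stage $\mathbf{x}$. As the optimal value of a minimization LP whose right-hand side $\mathbf{h}-\mathbf{G}\mathbf{x}-\mathbf{K}\bm{\xi}$ is affine in $\bm{\xi}$, the map $Q(\mathbf{x},\cdot)$ is convex and piecewise affine, hence continuous, on $\Xi$; since $\Xi=\times_i[\underline{\xi}^i,\overline{\xi}^i]$ is compact, it is bounded and uniformly continuous there. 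The transport map $\bm{\xi}\mapsto\|\bm{\xi}-\bm{\xi}_s^{\rm e}\|_1$ is likewise continuous and bounded on $\Xi$. These are precisely the conditions under which integrals against weakly approximating discrete measures converge, so the discretization is legitimate.

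The equivalence then follows from a sandwich. For the easy inequality, any feasible choice of atoms $\{\bm{\xi}_{sn}\}\subseteq\Xi$ and weights $\{\pi_{sn}\}$ satisfying \eqref{wcep-7}--\eqref{wcep-9} yields discrete measures $\mathbb{P}_s=\sum_n\pi_{sn}\delta_{\bm{\xi}_{sn}}$ that are feasible for \eqref{wcep-4}--\eqref{wcep-5} with identical objective value; hence, for every finite $N_s$, the maximum in \eqref{wcep-6} is at most $v(\mathbf{x})$, and so is its limit. For the reverse inequality one fixes an $\varepsilon$-optimal feasible profile $\{\mathbb{P}_s^\ast\}$ for \eqref{wcep-4}--\eqref{wcep-5}, selects a dense sequence of candidate atoms in $\Xi$, and approximates each $\mathbb{P}_s^\ast$ weakly by finitely supported measures on these atoms; by the uniform continuity and boundedness just established, the approximating objective converges to $\mathbb{E}_{\mathbb{P}_s^\ast}[Q(\mathbf{x},\cdot)]$ and the approximating budget converges to that of $\mathbb{P}_s^\ast$, which is $\le r$. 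Letting $N_s\to\infty$ and then $\varepsilon\to0$ shows the limit is at least $v(\mathbf{x})$, closing the argument.

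The delicate point---and the reason the statement is phrased as a limit rather than an exact finite-$N$ identity---is the reverse inequality's \emph{feasibility} bookkeeping: weak approximation only delivers asymptotic satisfaction of the budget \eqref{wcep-8}, so one must guarantee that the constraint holds (or is restored) along the sequence without eroding the objective. The clean device is to reallocate a vanishing fraction of mass onto the empirical atom $\bm{\xi}_s^{\rm e}$, where the transport cost $\|\bm{\xi}_s^{\rm e}-\bm{\xi}_s^{\rm e}\|_1=0$, thereby absorbing any small constraint violation at $o(1)$ cost in the objective; equivalently, one exploits that \eqref{wcep-6}--\eqref{wcep-9} re-optimizes the weights $\{\pi_{sn}\}$ and can meet the budget exactly. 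This feasibility-preservation step is exactly what Corollary 4 of \cite{lu2024} handles in the general case, so the remaining bespoke work is only the LP-value regularity of $Q(\mathbf{x},\cdot)$ recorded above.
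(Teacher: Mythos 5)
Your proposal is correct, but it is more than the paper itself provides: the paper offers no proof of Theorem~\ref{eq-summation} at all, simply stating it as adapted from Corollary~4 of \cite{lu2024}, whereas you both verify the structural hypotheses of that corollary and supply a self-contained two-sided argument. Your sandwich is sound: the easy direction (every feasible atom/weight profile induces measures $\mathbb{P}_s=\sum_n\pi_{sn}\delta_{\bm{\xi}_{sn}}$ feasible for \eqref{wcep-4}--\eqref{wcep-5} with the same objective) is immediate, and the reverse direction correctly identifies the two ingredients the citation leaves implicit --- that $Q(\mathbf{x},\cdot)$ is convex piecewise affine, hence bounded and uniformly continuous on the compact box $\Xi$ by relatively complete recourse, and that weak approximation of an $\varepsilon$-optimal $\{\mathbb{P}_s^\ast\}$ only satisfies the budget \eqref{wcep-8} asymptotically, which you repair at $o(1)$ objective cost by reallocating vanishing mass to the zero-transport-cost atom $\bm{\xi}_s^{\rm e}$ (the degenerate case $r=0$ forces $\mathbb{P}_s=\delta_{\bm{\xi}_s^{\rm e}}$ and is trivially discrete). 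Two minor notes: existence of the limit itself deserves a word --- the inner maximum is nondecreasing in each $N_s$ (append atoms with zero weight, permitted since $\pi_{sn}\in\mathbb{R}_+$) and bounded above by $v(\mathbf{x})$, so monotone convergence closes this without your $\limsup$/$\liminf$ bookkeeping; and your diagnosis that the limit phrasing is forced by feasibility bookkeeping is slightly off, since the paper's own Remark~\ref{sparsity}-ii records that \textbf{WCEP} admits an \emph{exact} finite reformulation supported on at most $S+1$ scenarios, so the limit form is inherited from the generality of the cited corollary rather than being intrinsically necessary. What each approach buys: the paper's citation keeps the development brief and defers to the general theory, while your argument makes the theorem self-contained and makes visible exactly where compactness of $\Xi$ and the recourse structure enter --- the same facts the paper later reuses in Proposition~\ref{prop-psp} and Corollary~\ref{thm-converge-cg}.
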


Theorem \ref{eq-summation} says that \textbf{WCEP} in the integration-based form \eqref{wcep-4}--\eqref{wcep-5} can be equivalently replaced by a weighted sum over a series of discrete scenarios. It is worth highlighting that this conversion enables us to approach \textbf{WCEP}'s optimal solution by gradually expanding  $\Big\{\{\bm{\xi}_{sn}\}_{n=1}^{N_s},\{\pi_{sn}\}_{n=1}^{N_s}\Big\}$'s. Also, \eqref{wcep-6} implies that we can navigate scenarios and their associated probabilities individually. Inspired by these insights, in the next section, we will showcase that \textbf{WCEP} can be addressed in a finite number of steps by leveraging the well-known CG algorithm \cite{lubbecke2005selected}, which is suitable for solving LPs with a vast number of columns. Thus, the number of resulting discrete scenarios also remains finite. Further, an exact and efficient algorithm, with CG subroutine for \textbf{WCEP}, is tailored in Section \ref{ccg} for the solution of the complete \textbf{DRD-MEMG}.\vspace{-5pt}

\begin{remark}\label{sparsity}
	i) The equivalence of \textbf{WCEP} in Theorem \ref{eq-summation} is derived from the \textit{primal perspective}, which contrasts with the mainstream research that primarily adopts the dual perspective, e.g., \cite{mohajerin2018data,zhao2018data}. It is worth highlighting that this equivalence is rather intuitive and facilitates the design of a more efficient algorithm for solving \textbf{DRD-MEMG}. 
	
	ii) In fact, according to \cite{stochastic1,yue2022linear,gao2023distributionally},  \textbf{WCEP} can be further reformulated as a finite mathematical program defined on a discrete probability distribution that is supported on at most $S+1$ scenarios\cite{lu2024}.  
\end{remark}

\section{Finite-Step Solution of WCEP via CG}\label{wcep-cg}

Consider a fixed $\hat{\mathbf{x}}$. For each $s\in[S]$, include some given scenarios in a set $\Omega_s(\hat{\mathbf{x}})$. By restricting \textbf{WCEP} on these scenarios, CG iteratively begins with a pricing master problem (\textbf{PMP}), which is an LP that determines the worst-case probability distribution over $\Omega_s(\hat{\mathbf{x}})$'s.
\begin{eqnarray}
	&&{\textbf{PMP}}:\underline{v}(\hat{\mathbf{x}}) =\max_{\pi_{\bm{\xi}_s}}~\textstyle\sum_{s=1}^S \sum_{\bm{\xi}_s\in\Omega_s(\hat{\mathbf{x}})}\pi_s^{\rm e}Q(\hat{\mathbf{x}},\bm{\xi}_{s})\pi_{\bm{\xi}_s}\label{pmp-1} \\ 
	&&\mathrm{s.t.}~ \textstyle\sum_{\bm{\xi}_s\in\Omega_s(\hat{\mathbf{x}})}\pi_{\bm{\xi}_s}=1,\quad \forall s\in[S]:\quad (\alpha_s\in\mathbb{R})\label{pmp-2} \\
	&&\quad\;\;\;\textstyle\sum_{s=1}^S\pi_s^{\rm e}\sum_{\bm{\xi}_s\in\Omega_s(\hat{\mathbf{x}})}\left\|\bm{\xi}_{s}-\bm{\xi}_s^{\rm e} \right\|_1\pi_{\bm{\xi}_s}\leq r:\; (\beta\in\mathbb{R}_+)\label{pmp-3} \\
	&&\quad\;\;\;\pi_{\bm{\xi}_s}\in\mathbb{R}_{+},\quad \forall \bm{\xi}_s\in\Omega_s(\hat{\mathbf{x}}), \forall s\in[S]\label{pmp-4}
\end{eqnarray}
Note that subscript $\bm{\xi}_s$ is used to make a distinction between decision vectors associated with different scenarios $\bm{\xi}_s\in\Omega_s(\hat{\mathbf{x}})$.

Collecting the optimal multipliers (shadow prices) $\{\hat{\alpha}_s\}_{s=1}^S$ and $\hat{\beta}$ from \textbf{PMP}, a pricing subproblem (\textbf{PSP}$_s$) is defined for each $s\in[S]$, which seeks for the optimal scenario corresponding to the maximized reduced cost, as follows:
\setlength{\arraycolsep}{-0.5em}
\begin{eqnarray}
	&&{\textbf{PSP}_s}:~{\mu}_s(\hat{\mathbf{x}}) = \max_{\bm{\xi}_s\in\Xi} ~\pi_s^{\rm e}(Q(\hat{\mathbf{x}},\bm{\xi}_s)\!-\!\hat{\beta}\left\|\bm{\xi}_s\!-\!\bm{\xi}_s^{\rm e}\right\|_1)\!-\!\hat{\alpha}_s\label{psp-1}\\
	&&\quad\quad\;=\max_{\bm{\xi}_s\in\Xi} ~\pi_s^{\rm e}(\min_{\mathbf{y}_s\in\mathbb{R}_{+}^{m_{\rm y}}}\mathbf{d}^{\intercal}\mathbf{y}_s-\hat{\beta}\left\|\bm{\xi}_s-\bm{\xi}_s^{\rm e}\right\|_1)-\hat{\alpha}_s\label{psp-2}\\
	&&\quad\quad\quad\quad{\rm s.t.}~\mathbf{F}\mathbf{y}_s\leq\mathbf{h}-\mathbf{G}\hat{\mathbf{x}}-\mathbf{K}\bm{\xi}_s:\quad(\bm{\lambda}_s\in\mathbb{R}_{+}^{m_{\lambda}})\label{psp-3}.
\end{eqnarray}
\textbf{PSP}$_s$ in  \eqref{psp-2}--\eqref{psp-3} is a non-linear bilevel optimization (BLO) problem, which possesses heavy computational burden. To address this issue, we first propose the following result, showing that the optimal value of \textbf{PSP}$_s$ can be achieved over a few simple scenarios. \vspace{-5pt}

\begin{prop}\label{prop-psp}
	There exists an optimal $\hat{\bm{\xi}}_{s}=(\hat{\xi}_{s}^1,\cdots,\hat{\xi}^{m_{\xi}}_{s})$ to \textbf{PSP}$_s$ satisfying $\hat{\xi}_{s}^i\in\{\overline{\xi}^{i},\underline{\xi}^{i},\xi_{s}^{{\rm e},i}\}$ for all $i\in[m_{\xi}]$.\vspace{-5pt}
\end{prop}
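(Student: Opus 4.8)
The plan is to exploit the convex, piecewise-linear dependence of the recourse value $Q(\hat{\mathbf{x}},\cdot)$ on $\bm{\xi}_s$ together with the coordinate-separable kink structure of the $L_1$ deviation penalty, and then to push an optimizer to one of the three candidate values in each coordinate by a coordinate-wise exchange argument. First I would establish that, for fixed $\hat{\mathbf{x}}$, the map $\bm{\xi}_s\mapsto Q(\hat{\mathbf{x}},\bm{\xi}_s)$ is convex and piecewise linear on $\Xi$. This is immediate from LP duality applied to the inner minimization in \eqref{psp-2}--\eqref{psp-3}: under relatively complete recourse $Q(\hat{\mathbf{x}},\bm{\xi}_s)$ is finite for every $\bm{\xi}_s\in\Xi$ and strong duality gives a representation $Q(\hat{\mathbf{x}},\bm{\xi}_s)=\max_{\bm{\lambda}_s\in\mathcal{D}}(\mathbf{h}-\mathbf{G}\hat{\mathbf{x}}-\mathbf{K}\bm{\xi}_s)^{\intercal}\bm{\lambda}_s$, where the dual-feasible set $\mathcal{D}$ does not depend on $\bm{\xi}_s$. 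For each fixed $\bm{\lambda}_s$ the objective is affine in $\bm{\xi}_s$, so $Q(\hat{\mathbf{x}},\cdot)$ is a pointwise maximum of affine functions, hence convex and piecewise linear.

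Next I would note that the \textbf{PSP}$_s$ objective $f(\bm{\xi}_s)=\pi_s^{\rm e}\big(Q(\hat{\mathbf{x}},\bm{\xi}_s)-\hat{\beta}\|\bm{\xi}_s-\bm{\xi}_s^{\rm e}\|_1\big)-\hat{\alpha}_s$ differs from (a positive multiple of) $Q$ only through the term $-\hat{\beta}\|\bm{\xi}_s-\bm{\xi}_s^{\rm e}\|_1=-\hat{\beta}\sum_{i}|\xi_s^i-\xi_s^{{\rm e},i}|$, whose single kink in the $i$-th coordinate sits exactly at $\xi_s^{{\rm e},i}$. The key one-dimensional observation is then: fixing all coordinates but the $i$-th and restricting $\xi_s^i$ to $[\underline{\xi}^i,\xi_s^{{\rm e},i}]$ or to $[\xi_s^{{\rm e},i},\overline{\xi}^i]$, the term $|\xi_s^i-\xi_s^{{\rm e},i}|$ is affine, so the restricted objective is convex (convex $Q$ plus affine, the positive factor $\pi_s^{\rm e}$ and additive constant $-\hat{\alpha}_s$ being irrelevant for the argmax). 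A convex function on a closed interval attains its maximum at an endpoint, so on each subinterval the maximum is at an endpoint; comparing the two subintervals shows the one-dimensional maximum over $[\underline{\xi}^i,\overline{\xi}^i]$ is attained at one of $\{\overline{\xi}^i,\underline{\xi}^i,\xi_s^{{\rm e},i}\}$.

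Finally I would lift this to all $m_{\xi}$ components by a coordinate sweep. Starting from any maximizer $\hat{\bm{\xi}}_s$ of \textbf{PSP}$_s$ (which exists since $f$ is continuous and $\Xi$ is a compact box), I would process $i=1,\dots,m_{\xi}$ in turn: at step $i$, holding the current values of all other coordinates fixed, re-optimize $\xi_s^i$ and, by the one-dimensional observation, replace $\hat{\xi}_s^i$ with a value in $\{\overline{\xi}^i,\underline{\xi}^i,\xi_s^{{\rm e},i}\}$ without decreasing $f$. Since the sweep begins at a maximizer and never decreases the objective, every iterate remains a maximizer, and because each coordinate, once set, is held fixed thereafter, the terminal point satisfies $\hat{\xi}_s^i\in\{\overline{\xi}^i,\underline{\xi}^i,\xi_s^{{\rm e},i}\}$ for all $i$, which proves the claim.

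The main obstacle, and the conceptual heart of the argument, is that $f$ is nonconvex: it is a difference of convex functions, so maximizing it over the box is not a convex program and the usual ``extreme points of a polytope'' reasoning does not apply. The resolution is the observation that the only non-affineness of the penalty along each coordinate is the kink at the empirical value $\xi_s^{{\rm e},i}$; splitting the feasible interval at this kink restores convexity on each piece and, crucially, reintroduces the empirical value itself as a third candidate alongside the two box endpoints. A secondary point requiring care is that the coordinate updates compose correctly—fixing already-processed coordinates at their three-valued settings must not interfere with later updates—which holds because each update is non-decreasing in $f$ and the candidate set for coordinate $i$ depends only on the problem data $\underline{\xi}^i,\overline{\xi}^i,\xi_s^{{\rm e},i}$, not on the other coordinates.
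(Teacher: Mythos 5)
Your proposal is correct, and it reaches the conclusion by a genuinely different route from the paper. The paper's proof dualizes the inner recourse LP and linearizes the $L_1$ term with auxiliary variables $\mathbf{z}_s$, turning \textbf{PSP}$_s$ into a bilinear program that is linear in $(\bm{\xi}_s,\mathbf{z}_s)$ for each fixed dual vector $\bm{\lambda}_s$; it then invokes the fundamental theorem of linear programming to place an optimizer at an extreme point of the lifted polyhedron $\widetilde{\Xi}_s=\times_{i=1}^{m_\xi}\widetilde{\Xi}_s^i$, and enumerates the three extreme points of each two-dimensional factor $\widetilde{\Xi}_s^i$ explicitly, namely $(\overline{\xi}^i,\overline{\xi}^i-\xi^{{\rm e},i})$, $(\underline{\xi}^i,\xi^{{\rm e},i}-\underline{\xi}^i)$, and $(\xi^{{\rm e},i},0)$. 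You instead keep $Q(\hat{\mathbf{x}},\cdot)$ as a convex piecewise-linear value function (your duality step here matches the paper's, and relatively complete recourse does justify strong duality), split each coordinate interval at the single kink $\xi_s^{{\rm e},i}$ of the penalty so that the restricted objective is convex on each piece and hence maximized at an endpoint, and propagate this to all coordinates by a non-decreasing sweep starting from a global maximizer, which exists by compactness of $\Xi$. Both arguments isolate the same structural fact — along each coordinate the only non-affineness of the penalty is the kink at the empirical value — but they package it differently: your route is more elementary and self-contained (no lifting, no vertex enumeration, and it would extend verbatim to any convex recourse value with a coordinate-separable piecewise-affine penalty, yielding candidate sets from box endpoints plus kink locations), while the paper's route is chosen because the dualized bilinear form is reused immediately afterward — it is exactly the program that the big-$M$ linearization turns into the MILP of Corollary \ref{cor-PSP}, and the extreme-point count $3^{m_\xi}$ of $\widetilde{\Xi}_s$ feeds directly into the finite-convergence bounds of Corollaries \ref{thm-converge-cg} and \ref{thm-converge-ccg}. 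Note that your argument recovers the same $3^{m_\xi}$ candidate count per sample, so nothing downstream is lost; one could only remark that your sweep silently uses $\hat{\beta}\ge 0$ to make the split necessary at all, and that for $\hat{\beta}<0$ (excluded anyway by dual feasibility in \eqref{pmp-3}) the objective would be convex along each coordinate and the claim would hold with the box endpoints alone.
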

\begin{proof}
	By dualizing the lower-level minimization problem, and by linearizing the $L_1$ norm term, \textbf{PSP}$_s$ becomes eqivalent to:
	\begin{align}
		\hspace{-4mm}
		\max_{\bm{\xi}_s,\bm{\lambda}_s,\mathbf{z}_s}~&\pi_s^{\rm e}[(\mathbf{G}\hat{\mathbf{x}}+\mathbf{K}\bm{\xi}_s-\mathbf{h})^{\intercal}\bm{\lambda}_s-\hat{\beta}\mathbf{1}^{\intercal}\mathbf{z}_s]-\hat{\alpha}_s\label{lower-dual-1}\\
		{\rm s.t.}~~~&\mathbf{d}+\mathbf{F}^{\intercal}\bm{\lambda}_s\geq \mathbf{0},\;\; \bm{\lambda}_s\in\mathbb{R}_{+}^{m_{\lambda}}\label{lower-dual-2}\\
		& \underline{\bm{\xi}}\leq\bm{\xi}_s\leq\overline{\bm{\xi}},\;\;\mathbf{z}_s\geq\bm{\xi}_s-\bm{\xi}_s^{\rm e},\;\;\mathbf{z}_s\geq\bm{\xi}_s^{\rm e}-\bm{\xi}_s. \label{lower-dual-5}
	\end{align}
	Note that problem \eqref{lower-dual-1}--\eqref{lower-dual-5} is a bilinear program that is linear in $(\bm{\xi}_s,\mathbf z_s)$ and $\bm \lambda_s$ respectively. Hence, for any fixed $\bm\lambda_s$, and following the Fundamental Theorem  of linear programming, there must exist an optimal $(\hat{\bm{\xi}}_s,\hat{\bm{z}}_s)$ that is one of the extreme points of polyhedron $\widetilde{\Xi}_s\triangleq\{(\bm{\xi}_s,\mathbf{z}_s)\in\mathbb{R}^{2m_{\xi}}:\eqref{lower-dual-5}\}$. 
	
	We note that $\widetilde{\Xi}_s$ restricts $(\bm{\xi}_s,\mathbf{z}_s)$ by imposing bounds on its individual components. For the $i$-th component, denoted by $({\xi}_s^i,{z}_s^i)$, its feasible set is 
	\begin{align}
		\nonumber\widetilde{\Xi}_s^i&=\big\{(\xi_s^i,z_s^i)\in\mathbb{R}^2:~\underline{\xi}^i\leq\xi_s^i\leq\overline{\xi}^i,\\
		&\quad\quad\quad\quad\quad z_s^i\geq \xi_s^i-\xi_s^{{\rm e},i},\;\;z_s^i\geq \xi_s^{{\rm e},i}-\xi_s^i\big\}.
	\end{align}
   Clearly, $\widetilde{\Xi}_s^i$ is a simple two-dimensional polyhedron and  $\widetilde{\Xi}_s=\times_{i=1}^{m_{\xi}}\widetilde{\Xi}_s^i$. According to Proposition 2.1.4 in \cite{bertsekas2009convex}, every extreme point of $\widetilde{\Xi}_s^i$ can be derived by converting any two constraints into equations and computing their intersections. By evaluating all $\binom{4}{2}=6$ possibilities and removing infeasible ones, we note that $\widetilde{\Xi}_s^i$ has three extreme points, i.e., $(\overline{\xi}^i,\overline{\xi}^i-\xi^{{\rm e},i})$, $(\underline{\xi}^i,\xi^{{\rm e},i}-\underline{\xi}^i)$, and $(\xi^{{\rm e},i},0)$, which completes the proof.  
\end{proof}\vspace{-5pt}

One major advantage of Proposition \ref{prop-psp} is that we can reduce $\Xi$ to a set of discrete scenarios to mitigate the computational burden of \textbf{PSP}$_s$. Specifically, let $\Delta^+_s\triangleq\overline{\bm{\xi}}-\bm{\xi}^{\rm e}_s$ and $\Delta_{s}^-\triangleq\bm{\xi}^{\rm e}_s-\underline{\bm{\xi}}$, and introduce binary vectors $\mathbf{u}_s^+,\mathbf{u}_s^-\in\{0,1\}^{m_{\xi}}$. Thus, an optimal $\bm{\xi}_s$ to \textbf{PSP}$_s$ can be rewritten in the form of $\bm{\xi}_s=\bm{\xi}_s^{\rm e}+\Delta^+_s\circ\mathbf{u}_s^+-\Delta^-_s\circ\mathbf{u}_s^-$ subject to $\mathbf{u}_s^++\mathbf{u}_s^-\leq\mathbf{1}$. Consequently, through its equivalence in \eqref{lower-dual-1}--\eqref{lower-dual-5} and the ``big-M'' linearization method \cite{9416873}, \textbf{PSP}$_s$ can be reduced to a  mixed-integer linear program (MILP): \vspace{-5pt}
\begin{cor}\label{cor-PSP}
	\textbf{PSP}$_s$ is equivalent to the following MILP:
	\begin{align}
		\hspace{-7mm}
		\nonumber{\mu}_s(\hat{\mathbf{x}}&)=\max_{\bm{\lambda}_s,\mathbf{z}_s,\mathbf{u}_s^+,\mathbf{u}_s^-,\mathbf{t}_s^+,\mathbf{t}_s^-}~\pi_s^{\rm e}[(\mathbf{G}\hat{\mathbf{x}}+\mathbf{K}\bm{\xi}_s^{\rm e}-\mathbf{h})^{\intercal}\bm{\lambda}_s\\
		&\quad\;\;\;-\hat{\beta}\mathbf{1}^{\intercal}\mathbf{z}_s +\mathbf{K}((\Delta_s^+)^{\intercal}\mathbf{t}_s^+-(\Delta_s^-)^{\intercal}\mathbf{t}_s^-)   ]-\hat{\alpha}_s\label{psp-4}\\
		{\rm s.t.}~&\mathbf{d}+\mathbf{F}^{\intercal}\bm{\lambda}_s\geq \mathbf{0}\label{psp-5}\\
		& \mathbf{z}_s=\Delta^+_s\circ\mathbf{u}_s^++\Delta^-_s\circ\mathbf{u}_s^-\label{psp-6}\\
		&\mathbf{t}_s^k\leq\bm{\lambda}_s,\; \mathbf{t}_s^k\geq\bm{\lambda}_s-M(\mathbf{1}-\mathbf{u}_s^k), \;\; \forall k\in\{+,-\}\label{psp-7}\\
		&\mathbf{0}\leq\mathbf{t}_s^k\leq M\mathbf{u}_s^k,\quad \forall k\in\{+,-\}\label{psp-8}\\
		& \mathbf{u}_s^++\mathbf{u}_s^-\leq\mathbf{1}\label{psp-9}\\
		& \bm{\lambda}_s\in\mathbb{R}_{+}^{m_{\lambda}},\;\mathbf{u}_s^+\in\{0,1\}^{m_{\xi}},\;\mathbf{u}_s^-\in\{0,1\}^{m_{\xi}},\label{psp-10}
	\end{align}
	where $M$ is a sufficiently large number. \vspace{-5pt}
\end{cor}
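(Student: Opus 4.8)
The plan is to continue directly from the bilinear reformulation \eqref{lower-dual-1}--\eqref{lower-dual-5} of $\textbf{PSP}_s$ that was already obtained in the proof of Proposition~\ref{prop-psp} (by dualizing the lower-level minimization and linearizing the $L_1$ term with $\mathbf{z}_s$), and to convert it into the MILP \eqref{psp-4}--\eqref{psp-10} through two linearization passes: first encode the three-valued optimal scenario with binaries, then eliminate the residual product between those binaries and the dual vector $\bm{\lambda}_s$.

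First I would use Proposition~\ref{prop-psp} to restrict attention, without loss of optimality, to scenarios whose $i$-th coordinate lies in $\{\overline{\xi}^i,\underline{\xi}^i,\xi_s^{{\rm e},i}\}$. Setting $\bm{\xi}_s=\bm{\xi}_s^{\rm e}+\Delta_s^+\circ\mathbf{u}_s^+-\Delta_s^-\circ\mathbf{u}_s^-$ with $\mathbf{u}_s^+,\mathbf{u}_s^-\in\{0,1\}^{m_{\xi}}$ and the exclusivity constraint $\mathbf{u}_s^++\mathbf{u}_s^-\leq\mathbf{1}$ in \eqref{psp-9}, I would verify that the patterns $(u_s^{+,i},u_s^{-,i})\in\{(1,0),(0,1),(0,0)\}$ reproduce exactly $\overline{\xi}^i,\underline{\xi}^i,\xi_s^{{\rm e},i}$, so the discrete encoding and the bilinear program share the same optimal value. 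Under this encoding the penalty term becomes $|\xi_s^i-\xi_s^{{\rm e},i}|=\Delta_s^{+,i}u_s^{+,i}+\Delta_s^{-,i}u_s^{-,i}$ (the two cases never co-occur); since $\hat{\beta}\geq 0$ and $-\hat{\beta}\mathbf{1}^{\intercal}\mathbf{z}_s$ is being maximized, each $z_s^i$ is pushed to its lower bound, which justifies replacing the two inequalities of \eqref{lower-dual-5} by the equality \eqref{psp-6}.

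The only remaining nonlinearity is the coupling $(\mathbf{K}\bm{\xi}_s)^{\intercal}\bm{\lambda}_s$, which after substituting the encoding splits into a constant part $(\mathbf{G}\hat{\mathbf{x}}+\mathbf{K}\bm{\xi}_s^{\rm e}-\mathbf{h})^{\intercal}\bm{\lambda}_s$ plus terms bilinear in the binary components $\mathbf{u}_s^\pm$ and the continuous entries of $\bm{\lambda}_s$. I would introduce the product variables $\mathbf{t}_s^+,\mathbf{t}_s^-$ to stand for these binary--continuous products and impose the standard big-M (McCormick) constraints \eqref{psp-7}--\eqref{psp-8}, which force $\mathbf{t}_s^k$ to equal the corresponding continuous quantity when $\mathbf{u}_s^k=\mathbf{1}$ and to vanish when $\mathbf{u}_s^k=\mathbf{0}$, i.e., exactly the product value. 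Substituting these variables yields the linear objective \eqref{psp-4} together with \eqref{psp-5}--\eqref{psp-10}, completing the reformulation.

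The hard part will be arguing that a \emph{finite} $M$ makes the big-M step exact, which reduces to bounding the optimal $\bm{\lambda}_s$. I would establish this from the LP structure: for each fixed feasible $\bm{\xi}_s$ the inner maximization over $\bm{\lambda}_s$ equals $Q(\hat{\mathbf{x}},\bm{\xi}_s)$, which is finite by relatively complete recourse, so the optimum is attained at a vertex of the pointed polyhedron $\{\bm{\lambda}_s\geq\mathbf{0}:\mathbf{d}+\mathbf{F}^{\intercal}\bm{\lambda}_s\geq\mathbf{0}\}$. Because Proposition~\ref{prop-psp} leaves only finitely many candidate scenarios (three values per coordinate) and the polyhedron has finitely many vertices, taking $M$ larger than the greatest dual component over this finite collection guarantees that \eqref{psp-7}--\eqref{psp-8} are never artificially binding, so the MILP attains the same value as $\textbf{PSP}_s$ and the equivalence follows.
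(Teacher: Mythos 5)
Your proposal is correct and follows essentially the same route as the paper, which justifies Corollary~\ref{cor-PSP} directly from the bilinear reformulation \eqref{lower-dual-1}--\eqref{lower-dual-5} in the proof of Proposition~\ref{prop-psp}, the binary encoding $\bm{\xi}_s=\bm{\xi}_s^{\rm e}+\Delta^+_s\circ\mathbf{u}_s^+-\Delta^-_s\circ\mathbf{u}_s^-$ with $\mathbf{u}_s^++\mathbf{u}_s^-\leq\mathbf{1}$, and standard big-$M$ linearization of the binary--continuous products. Your closing argument---bounding the optimal $\bm{\lambda}_s$ via vertices of the pointed dual polyhedron over the finitely many candidate scenarios so that a finite $M$ is exact---is a welcome refinement of a point the paper leaves implicit in the phrase ``$M$ is a sufficiently large number.''
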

\noindent We note that \eqref{psp-4}--\eqref{psp-10} is computationally much more friendly than \eqref{psp-2}--\eqref{psp-3}, and can be directly handled by any MILP solver. Therefore, we adopt it as \textbf{PSP}$_s$ in CG.

After solving \textbf{PSP}$_s$'s, whenever there exists some $s'$ such that its corresponding reduced cost $\mu_{s'}(\hat{\mathbf{x}})>0$, create a new variable $\pi_{\hat{\bm{\xi}}_{s'}}$ to \textbf{PMP} and update $\Omega_{s'}(\hat{\mathbf{x}})\leftarrow\Omega_{s'}(\hat{\mathbf{x}})\cup\{\hat{\bm{\xi}}_{s'}\}$. Then, repeat the solution procedure until $\mu_{s}(\hat{\mathbf{x}})\leq0$ is satisfied for all $s\in[S]$. 

According to Proposition \ref{prop-psp}, the following result regarding the finite-step convergence of the aforementioned CG procedure can be readily established. The detailed proof is presented in Appendix \ref{prf-convergence-cg}.\vspace{-5pt}

\begin{cor}\label{thm-converge-cg}
CG will converge to the optimum of \textbf{WCEP} in a finite number of iterations, which is bounded by $3^{m_{\xi}}\cdot S$.\vspace{-5pt}
\end{cor}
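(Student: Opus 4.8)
The plan is to combine two ingredients: (i) Proposition~\ref{prop-psp}, which confines the optimum of every pricing subproblem to a finite candidate set and thereby turns \textbf{WCEP} into a finite (though combinatorially large) linear program; and (ii) the classical column-generation guarantee that, once the restricted master is solved to optimality, no previously generated column can price out favorably, so that no column is ever regenerated.

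First I would fix $\hat{\mathbf{x}}$ and, for each $s\in[S]$, introduce the candidate set
\[
\mathcal{V}_s \triangleq \big\{ \bm{\xi}_s : \xi_s^i\in\{\overline{\xi}^i,\underline{\xi}^i,\xi_s^{{\rm e},i}\}\ \text{for all}\ i\in[m_\xi] \big\}.
\]
By Proposition~\ref{prop-psp} each \textbf{PSP}$_s$ attains its maximum at a point of $\mathcal{V}_s$, and $|\mathcal{V}_s|\le 3^{m_\xi}$. Consequently, by Theorem~\ref{eq-summation} together with Proposition~\ref{prop-psp}, pricing over the continuous box $\Xi$ can never discover an improving scenario outside $\bigcup_s\mathcal{V}_s$, so a discrete distribution supported on these candidate scenarios already attains $v(\hat{\mathbf{x}})$; in other words, the CG procedure is precisely delayed column generation applied to the finite master LP obtained by setting $\Omega_s(\hat{\mathbf{x}})=\mathcal{V}_s$.

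Second, I would argue that each non-terminating iteration appends at least one genuinely new column. The reduced cost of the column for scenario $\bm{\xi}_s$ in \textbf{PMP} is exactly $\pi_s^{\rm e}(Q(\hat{\mathbf{x}},\bm{\xi}_s)-\hat{\beta}\|\bm{\xi}_s-\bm{\xi}_s^{\rm e}\|_1)-\hat{\alpha}_s$, namely the \textbf{PSP}$_s$ objective evaluated at $\bm{\xi}_s$. Since \textbf{PMP} is solved to optimality, LP optimality (complementary slackness with the duals $\hat{\alpha}_s,\hat{\beta}$) forces every column already in $\Omega_s(\hat{\mathbf{x}})$ to have reduced cost at most $0$. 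Hence whenever some \textbf{PSP}$_{s'}$ reports $\mu_{s'}(\hat{\mathbf{x}})>0$, the returned $\hat{\bm{\xi}}_{s'}$ must be absent from $\Omega_{s'}(\hat{\mathbf{x}})$ and is added as a new scenario, strictly enlarging $\Omega_{s'}(\hat{\mathbf{x}})$ while keeping it inside $\mathcal{V}_{s'}$.

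Finally, I would complete the counting and the optimality certificate. Because columns are never repeated and every column for sample $s$ lies in $\mathcal{V}_s$, the running set $\Omega_s(\hat{\mathbf{x}})$ can be enlarged at most $|\mathcal{V}_s|\le 3^{m_\xi}$ times; summed over the $S$ samples, at most $3^{m_\xi}\cdot S$ columns can ever be generated. Since each non-terminating iteration adds at least one column, CG performs at most $3^{m_\xi}\cdot S$ iterations. It halts precisely when $\mu_s(\hat{\mathbf{x}})\le 0$ for all $s$, which is the optimality certificate for the finite master LP; by the equivalence established in the first step, the incumbent \textbf{PMP} value then equals $v(\hat{\mathbf{x}})$, so CG converges to the optimum of \textbf{WCEP}. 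I expect the crux to be the non-repetition claim---translating LP optimality of \textbf{PMP} into the non-positivity of reduced costs for already-generated columns---since this is exactly what makes the candidate set be consumed monotonically and yields the finite bound, whereas the discretization itself follows directly from Proposition~\ref{prop-psp}.
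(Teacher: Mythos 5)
Your proposal is correct and follows essentially the same route as the paper's proof in Appendix~\ref{prf-convergence-cg}: your non-repetition step (LP optimality of \textbf{PMP} forces non-positive reduced costs on all generated columns, so any $\mu_{s'}(\hat{\mathbf{x}})>0$ yields a genuinely new scenario) is exactly the paper's Claim~\ref{cla-cg}, and your counting over the candidate sets $\mathcal{V}_s$ matches the paper's count of the $3^{m_{\xi}}$ extreme points of $\widetilde{\Xi}_s$ per sample. The only cosmetic difference is that the paper explicitly invokes the MILP construction of Corollary~\ref{cor-PSP} to guarantee the returned scenario is such an extreme point, which you cover implicitly by identifying CG with delayed column generation on the finite master over $\bigcup_s \mathcal{V}_s$.
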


\begin{remark}
	For $s_1\ne s_2$, \textbf{PSP}$_{s_1}$ and \textbf{PSP}$_{s_2}$ are independent. This provides an opportunity for us to perform parallel computation for \textbf{PSP}$_{s}$'s to accelerate the CG procedure.
\end{remark}

\section{Solving DRD-MEMG via C\&CG-DRO(CG)}\label{ccg}
With the aforementioned CG procedure to solve \textbf{WCEP}, we next integrate it within the classical C\&CG framework \cite{zeng2013solving} to develop a complete C\&CG-DRO(CG) algorithm for the complex \textbf{DRD-MEMG}, which, according to \cite{lu2024}, has a significantly strong computational capacity. Let $\Upsilon_s$ represent the set of scenarios generated up to the current iteration for $s\in [S]$. Then, the main master problem, denoted as \textbf{MMP}, is formulated as follows. 
\setlength{\arraycolsep}{-0.5em}
\begin{eqnarray}
	&&\textbf{MMP}:~\underline{w} =\; \min_{\mathbf{x},\eta,\eta_{\bm{\xi}_s},\mathbf{y}_{\bm{\xi}_s}} ~\mathbf{c}^{\intercal}\mathbf{x}+\eta \label{mmp-1} \\ 
	&&\mathrm{s.t.} ~ 
	\mathbf{A}\mathbf{x}\leq\mathbf{b} \label{mmp-2}  \\
	&&\quad\;\;\;\eta\geq\max_{\pi_{\bm{\xi}_s}\in\mathbb{R}_+}~{\textstyle\sum_{s=1}^S}\Big\{\textstyle\pi_s^{\rm e}\sum_{\bm{\xi}_s\in\Upsilon_s}\eta_{\bm{\xi}_s}\pi_{\bm{\xi}_s}:\label{mmp-3}\\
	&&\quad\;\;\;\textstyle\sum_{\bm{\xi}_s\in\Upsilon_s}\pi_{\bm{\xi}_s}=1:\quad(\alpha_s\in\mathbb{R})\label{mmp-4}\\
	&&\quad\;\;\;\textstyle\sum_{s=1}^{S}\sum_{\bm{\xi}_s\in\Upsilon_s}\pi_s^{\rm e}\left\|\bm{\xi}_s-\bm{\xi}_s^{\rm e} \right\|_1\pi_{\bm{\xi}_s}\leq r:\;(\beta\in\mathbb{R}_+)\Big\}\label{mmp-5} \\
	&&\quad\;\;\;\eta_{\bm{\xi}_s}=\mathbf{d}^{\intercal}\mathbf{y}_{\bm{\xi}_s},\quad \forall\bm{\xi}_s\in\Upsilon_s,\forall s\in[S] \label{mmp-6}   \\
	&&\quad\;\;\; \mathbf{F}\mathbf{y}_{\bm{\xi}_s}\leq\mathbf{h}-\mathbf{G}\mathbf{x}-\mathbf{K}\bm{\xi}_s,\quad \forall\bm{\xi}_s\in\Upsilon_s,\forall s\in[S]\label{mmp-7}\\
	&&\quad\;\;\;\mathbf{x}\in\left\lbrace 0,1\right\rbrace ^{n_{\rm x}}\times\mathbb{R}^{m_{\rm x}}_{+},\;\mathbf{y}_{\bm{\xi}_s}\in\mathbb{R}_{+}^{m_{\rm y}} \label{mmp-8}
\end{eqnarray}
Note that \eqref{mmp-1}--\eqref{mmp-8} is a BLO problem. By the strong duality of its lower-level LP, an equivalent single-level counterpart of \textbf{MMP} is derived below, which is adopted in our algorithm.
\begin{eqnarray}
	&&\underline{w} =\; \min_{\mathbf{x},\eta,{\alpha}_s,{\beta},\eta_{\bm{\xi}_s},\mathbf{y}_{\bm{\xi}_s}} ~\mathbf{c}^{\intercal}\mathbf{x}+\eta \label{mmp-9} \\ 
	&&\mathrm{s.t.} ~ 
	 \eqref{mmp-2},\;\eqref{mmp-6}\text{--}\eqref{mmp-8}\label{mmp-10}  \\
	&&\quad\;\;\;  \eta\geq\textstyle\sum_{s=1}^S{\alpha}_s+r{\beta},\;\;\alpha_s\in\mathbb{R}, \beta\in\mathbb{R}_+ \label{mmp-11}  \\
	&&\quad\;\;\;{\alpha}_s+\pi_s^{\rm e}\left\|\bm{\xi}_s-\bm{\xi}_s^{\rm e} \right\|_1{\beta}\geq\pi_s^{\rm e}\eta_{\bm{\xi}_s},\;\; \forall \bm{\xi}_s\in\Upsilon_s,\forall s\in[S]\label{mmp-12}
\end{eqnarray}
Obviously, \textbf{MMP} is a relaxation of \textbf{DRD-MEMG}, resulting in a  lower bound (LB) to $w$, i.e., 
\begin{equation}
	\textrm{LB}=\underline{w}\leq w.\label{def-lb}
\end{equation}

With an optimal $\hat{\mathbf{x}}$ output from \textbf{MMP}, we solve \textbf{WCEP} by using the CG procedure in Section \ref{wcep-cg} to generate a set of new \textit{optimality cuts}. Specifically, for $s\in[S]$, let $\widehat{\Omega}_s(\hat{\mathbf{x}})$ denote the optimal scenario set obtained in CG. For each $s$, we update $\Upsilon_s\leftarrow\Upsilon_s\cup\widehat{\Omega}_s(\hat{\mathbf{x}})$, and introduce variables $\{\mathbf{y}_{\hat{\bm{\xi}}_{s}},\eta_{\hat{\bm{\xi}}_{s}}\}_{\bm{\xi}_s\in\widehat{\Omega}_s(\hat{\mathbf{x}})}$ and constraints \eqref{new-cut-1}--\eqref{new-cut-3} to strengthen \textbf{MMP}:
\begin{align}
	&{\alpha}_s+\pi_s^{\rm e}\|\hat{\bm{\xi}}_{s}-\bm{\xi}_s^{\rm e} \|_1{\beta}\geq\pi_s^{\rm e}\eta_{\hat{\bm{\xi}}_{s}},\quad \forall \bm{\xi}_s\in\widehat{\Omega}_s(\hat{\mathbf{x}}) \label{new-cut-1} \\
	&\eta_{\hat{\bm{\xi}}_{s}}=\mathbf{d}^{\intercal}\mathbf{y}_{\hat{\bm{\xi}}_{s}},\;\mathbf{y}_{\hat{\bm{\xi}}_{s}}\in\mathbb{R}_{+}^{m_{\rm y}},\quad \forall \bm{\xi}_s\in\widehat{\Omega}_s(\hat{\mathbf{x}}) \label{new-cut-2}  \\
	& \mathbf{F}\mathbf{y}_{\hat{\bm{\xi}}_{s}}\leq\mathbf{h}_{\hat{\bm{\xi}}_{s}}-\mathbf{G}\mathbf{x}-\mathbf{K}\hat{\bm{\xi}}_{s},\quad \forall \bm{\xi}_s\in\widehat{\Omega}_s(\hat{\mathbf{x}}).\label{new-cut-3}
\end{align}
Meanwhile, an upper bound (UB) to $w$ can be updated as:
\begin{align}
\mathrm{UB}=\min\{\mathrm{UB}\;,\;\mathbf{c}^{\intercal}\hat{\mathbf{x}}+v(\hat{\mathbf{x}})\}\geq w.\label{def-ub}
\end{align}

The entire C\&CG-DRO(CG) iteratively solves \textbf{MMP} and \textbf{WCEP} until the relative solution gap ($\frac{\mathrm{UB}-\mathrm{LB}}{\left| \mathrm{LB}\right|}\times100\%$) falls below a pre-defined tolerance $\varepsilon\geq 0$. 

The overall flow of our algorithm is described in Appendix \ref{alg-overflow}, which exhibits a nested architecture. In the outer loop, we gradually augment a relaxation through the C\&CG framework to approach the optimum of \textbf{DRD-MEMG}. The inner one adopts CG as the subroutine to derive  \textbf{WCEP}'s optimal solution. Hence, the complete algorithm is referred to as C\&CG-DRO(CG), whose convergence result is established in the following. Please see the detailed proof in Appendix \ref{prf-convergence-ccg}.\vspace{-5pt}

\begin{cor}\label{thm-converge-ccg}
	C\&CG-DRO(CG) will converge to the global optimum of \textbf{DRD-MEMG} within $3^{m_{\xi}}\cdot S$ iterations.
	\vspace{-5pt}
\end{cor}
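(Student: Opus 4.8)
The plan is to combine two independent facts: (i) \emph{correctness}, i.e., the algorithm maintains a valid lower bound $\mathrm{LB}=\underline{w}$ and a valid upper bound $\mathrm{UB}$ that sandwich $w$, so that a zero gap certifies global optimality; and (ii) \emph{finiteness}, i.e., only finitely many distinct scenarios can ever be generated, and each non-terminating outer iteration must generate at least one new one. Throughout, relatively complete recourse guarantees that \textbf{WCEP} is finite-valued and \textbf{MMP} is feasible at every iteration, while Corollary \ref{thm-converge-cg} guarantees that the inner CG subroutine returns the \emph{exact} value $v(\hat{\mathbf{x}})$ in finitely many steps, so the UB update \eqref{def-ub} is exact.

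For correctness, I would first observe that \textbf{MMP} optimizes $\mathbf{x}$ over a relaxation of \textbf{DRD-MEMG} in which the ambiguity set is restricted to distributions supported on the generated scenario sets $\Upsilon_s$; hence $\mathrm{LB}=\underline{w}\le w$ as noted in \eqref{def-lb}, and $\mathrm{LB}$ is nondecreasing because adding cuts only tightens the relaxation. Conversely, since $\hat{\mathbf{x}}$ returned by \textbf{MMP} is feasible for \eqref{1stg-3} and $v(\hat{\mathbf{x}})$ is computed exactly, $\mathbf{c}^{\intercal}\hat{\mathbf{x}}+v(\hat{\mathbf{x}})$ is an attainable objective value of \textbf{DRD-MEMG}, so $\mathrm{UB}\ge w$ and $\mathrm{UB}$ is nonincreasing by \eqref{def-ub}. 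Therefore, whenever $\mathrm{UB}=\mathrm{LB}$ (the $\varepsilon=0$ stopping test), the incumbent $\hat{\mathbf{x}}$ is globally optimal.

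The heart of the argument is finiteness. By Proposition \ref{prop-psp}, every scenario produced by any \textbf{PSP}$_s$ has each coordinate in the three-element set $\{\overline{\xi}^{i},\underline{\xi}^{i},\xi_s^{{\rm e},i}\}$; hence, for each $s$ there are at most $3^{m_{\xi}}$ candidate scenarios, and the entire universe from which the algorithm can ever draw cuts has cardinality at most $3^{m_{\xi}}\cdot S$. Since scenarios are only added and never removed, it suffices to show that every outer iteration that does \emph{not} terminate adds at least one scenario absent from the current $\bigcup_s\Upsilon_s$. To this end I would use LP duality on the inner bilevel block \eqref{mmp-3}--\eqref{mmp-5}: at the optimum of the single-level \textbf{MMP} \eqref{mmp-9}--\eqref{mmp-12} the auxiliary variable $\eta$ equals the \emph{restricted} worst-case expectation of $Q(\hat{\mathbf{x}},\cdot)$ over distributions supported on $\Upsilon_s$, i.e.\ $\eta=\underline{v}(\hat{\mathbf{x}})$, the optimal value of the \textbf{PMP} evaluated on the current $\Upsilon_s$. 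Consequently, if the exact \textbf{WCEP} solution at $\hat{\mathbf{x}}$ placed all its mass on scenarios already in $\bigcup_s\Upsilon_s$, then $v(\hat{\mathbf{x}})=\underline{v}(\hat{\mathbf{x}})=\eta$, which forces $\mathrm{UB}=\mathbf{c}^{\intercal}\hat{\mathbf{x}}+v(\hat{\mathbf{x}})=\mathbf{c}^{\intercal}\hat{\mathbf{x}}+\eta=\mathrm{LB}$ and termination. Taking the contrapositive, a strictly positive gap forces the generation of a genuinely new scenario.

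Combining the pieces, each non-terminating iteration strictly enlarges the finite set $\bigcup_s\Upsilon_s$, whose size never exceeds $3^{m_{\xi}}\cdot S$; thus after at most $3^{m_{\xi}}\cdot S$ iterations no new scenario can be added, the gap must close, and the algorithm terminates at a global optimum. I expect the main obstacle to be the middle step --- rigorously identifying $\eta$ with the restricted \textbf{WCEP} value $\underline{v}(\hat{\mathbf{x}})$ through the strong duality that links \eqref{mmp-11}--\eqref{mmp-12} to the \textbf{PMP}, and from this deducing the ``new scenario per iteration'' property --- since this is precisely what rules out cycling and converts the finite scenario universe into a genuine iteration bound.
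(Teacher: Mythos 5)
Your proposal is correct and follows essentially the same route as the paper's own proof: its Claim~\ref{cla-ccg} is precisely your contrapositive step (if no scenario outside the current $\Upsilon_s$'s is generated, strong duality of \textbf{PMP} identifies the master's $\eta$ with the restricted worst-case value and forces $\mathrm{LB}\geq\mathbf{c}^{\intercal}\hat{\mathbf{x}}+v(\hat{\mathbf{x}})\geq\mathrm{UB}$, hence termination). The iteration bound is likewise obtained exactly as you argue, from Proposition~\ref{prop-psp} and Corollary~\ref{thm-converge-cg}: every generated scenario is an extreme point of $\widetilde{\Xi}_s$, of which there are at most $3^{m_{\xi}}$ per sample, so the scenario universe---and hence the number of non-terminating iterations---is bounded by $3^{m_{\xi}}\cdot S$.
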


\begin{remark}\label{remark-strategy}
	Based on the specific structure of \textbf{DRD-MEMG}, we customize the following enhancement strategies in C\&CG-DRO(CG) implementation. Yet, in theoretical analysis, we still adopt the original algorithm version.
	
	i) \textbf{Warm Start}: The initialization of ${\Upsilon}_s$'s and ${\Omega}_s(\hat{\mathbf{x}})$'s is important for C\&CG-DRO(CG). On the one hand, they must guarantee the feasibility of \eqref{mmp-12} and \eqref{pmp-3}. On the other hand, they may affect the convergence efficiency. Hence, we initialize ${\Upsilon}_s=\{\bm{\xi}^{\rm e}_s\}$, and, in  CG, $\Omega_s(\hat{\mathbf{x}})$ directly inherits the current ${\Upsilon}_s$ before we start CG. 
	
	ii) \textbf{Solution Space Reduction for \textbf{PSP}$_s$}: Let $\chi_{\rm wt}^t/\chi_{\rm pv}^t$ be the multipliers of \eqref{cons-2stg-5}. Involving them in \textbf{PSP}$_s$'s objective function by dualizing the recourse problem, we have\vspace{-5pt}
	$$
	\max_{\tilde{\phi}_k^t,\chi_k^t}~\textstyle\sum_{t\in[t]}\sum_{k\in\{{\rm wt,pv}\}}(-\tilde{\phi}^t_{k}P_{k}\cdot \chi_{k}^t-\beta |\tilde{\phi}^t_{k}-\tilde{\phi}^{{\rm e},t}_{k} |).\vspace{-5pt}
	$$
	Given that $\chi_{k}^t,\beta\geq0$, and considering the maximization direction, the optimal solution of $\tilde{\phi}_{\rm wt}^t/\tilde{\phi}_{\rm pv}^t$ is either its minimum value or just the sample's value, i.e., the optimal $\hat{\xi}_s^i$ associated with $\tilde{\phi}_{\rm wt}^t/\tilde{\phi}_{\rm pv}^t$ can be further restricted to $\{\underline{\xi}_s^i,{\xi}_s^{{\rm e},i}\}$.
	
	iii) \textbf{Scenario Selection}: In the execution of  CG, we notice that the number of scenarios, where the optimal distribution to \textbf{WCEP} is supported, is not more than $S+1$. This observation is consistent with Remark \ref{sparsity}-ii. To avoid redundant scenarios in \textbf{MMP} in the implementation of C\&CG-DRO(CG), we update $\Upsilon_s$'s by only including the scenarios with non-zero probabilities in $\widehat{\Omega}_s(\hat{\mathbf{x}})$'s.
\end{remark}

\section{Numerical Studies}\label{case}
Numerical studies have been conducted on an MEMG in an industrial park.  Main parameters for MEMG's dispatch are summarized in Appendix \ref{case-set}. The time interval ($\Delta_t$) is chosen as 0.5 hour (h), i.e., $T=48$. The forecasted values and the corresponding sample spaces of the random factors are also provided in Appendix \ref{case-set}. We list the electricity transaction prices in Table \ref{tab:e-price}, and the hydrogen procurement price is 5.724 \$/kg. As to the ambiguity set, we have simulated $S=10$ samples, each with $\pi_s=0.1$, and Wasserstein radius $r$ is set to 0.8. The numerical experiments have been implemented on a laptop with Intel i9-13900H processor (14 cores) and 16GB of RAM. C\&CG-DRO(CG) has been implemented by MATLAB R2023b with Gurobi 11.0. $M$ in \textbf{PSP}$_s$ is chosen as $10^{4}$, the termination tolerance $\varepsilon$ is selected as 0.5\%, and the time budget is set to 7200 seconds (s).

\begin{table}[!t]
  \centering
  \caption{Electricity Transaction Prices}
  \vspace{-5pt}
  	\resizebox{0.9\linewidth}{!}{%
  	\renewcommand{\arraystretch}{0.5}
    \begin{tabular}{ccccccccc}
    \toprule
    Time (h) & 1--8   & 9--10  & 11--14 & 15--17 & 18    & 19--22 & 23    & 24 \\
    \midrule
    $c_{\rm e,buy}^t$ (\$/kWh) & 0.0431  & 0.1135  & 0.1875  & 0.1135  & 0.1875  & 0.2058  & 0.1875  & 0.1140  \\
    $c_{\rm e,sell}^t$ (\$/kWh) & 0.0345  & 0.0908  & 0.1500  & 0.0908  & 0.1500  & 0.1646  & 0.1500  & 0.0912  \\
    \bottomrule
    \end{tabular}%
}\vspace{-15pt}
  \label{tab:e-price}%
\end{table}%

\subsection{MEMG's Dispatch Results}\label{result}

\subsubsection{Dispatch Scheme} 
By using our DRD approach, the day-ahead dispatch scheme of MEMG is visualized in Fig. \ref{pre-result}. As an RE-enriched system, solar and wind energy constitute MEMG's primary sources, generating approximately 58,131.41 kWh to fulfill multiple energy demands. They have complementary characteristics. As shown in Fig. \ref{sub-e}, WT generation is higher during the night-time and early-morning periods, specifically from 1 to 7 AM and from 7 PM to midnight, and yet it is relatively lower during daytime slots. PV outputs, consistent with the trend of solar irradiation, remain zero during night-time periods, begin to increase from 7 AM, peak around noon, and then gradually decrease. Electricity transactions with the utility grid serve as a supplementary source of energy and provide opportunities for profit. In Fig. \ref{sub-e}, MEMG purchases electricity from 2 to 9 AM, from 3 to 6 PM, and at midnight. The electricity selling occurs during 11 AM--3 PM and 7 PM--12 AM at higher prices (i.e., 0.15 and 0.1646 \$/kWh as listed in Table \ref{tab:e-price}). ELZ, which is a P2HH device, operates to produce hydrogen, and coordinates with FC and HWT for heat supply, as illustrated in Fig. \ref{sub-t}. Notably, ELZ is mainly driven by surplus RE and supplimented by purchased electricity during off-peak periods to reduce cost. Moreover, the energy storage systems help compensate for the temporal mismatch between the provision and consumption, and operate for potential price arbitrage. For example, BSS discharges when electricity consumption or selling prices are high, e.g., from 9 AM to 3 PM and from 7 PM to 12 AM, while charges during the other time slots. The energy storage levels of BSS, HWT, and HT are shown in Fig. \ref{sub-s}.

\begin{figure}[!t]
	\centering
	\subfloat[Operation of electrical subsystem.]{
		\label{sub-e}
		\includegraphics[width=0.4\textwidth]{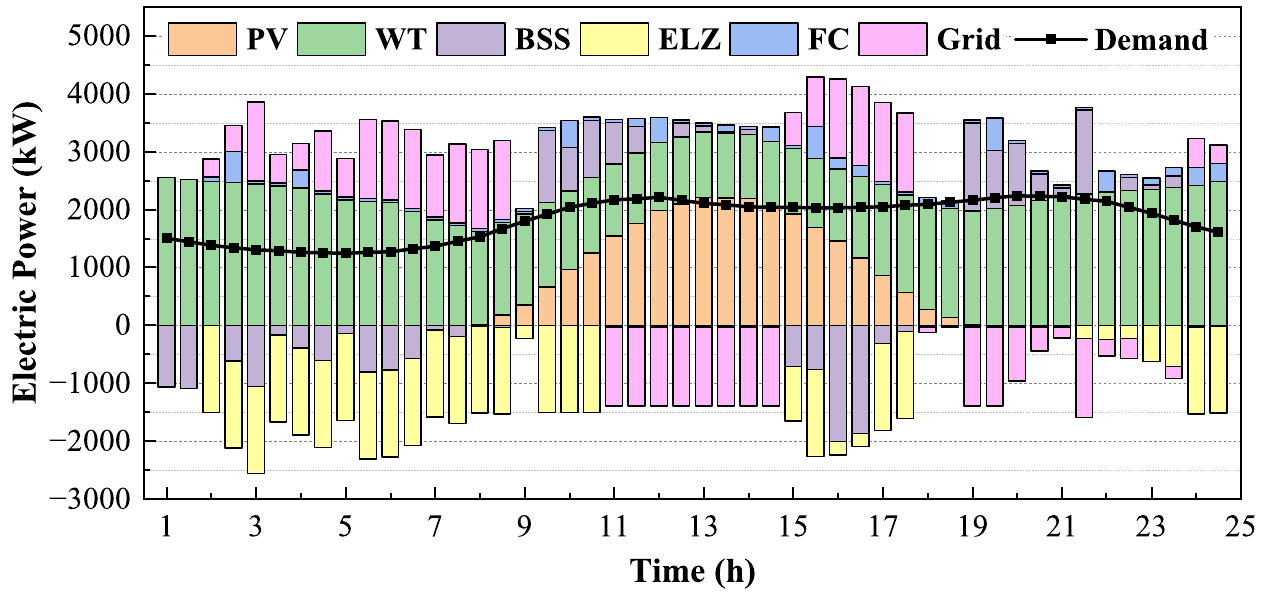}
	}\vspace{-10pt}\\
	\subfloat[Operation of thermal subsystem.]{
		\label{sub-t}
		\includegraphics[width=0.4\textwidth]{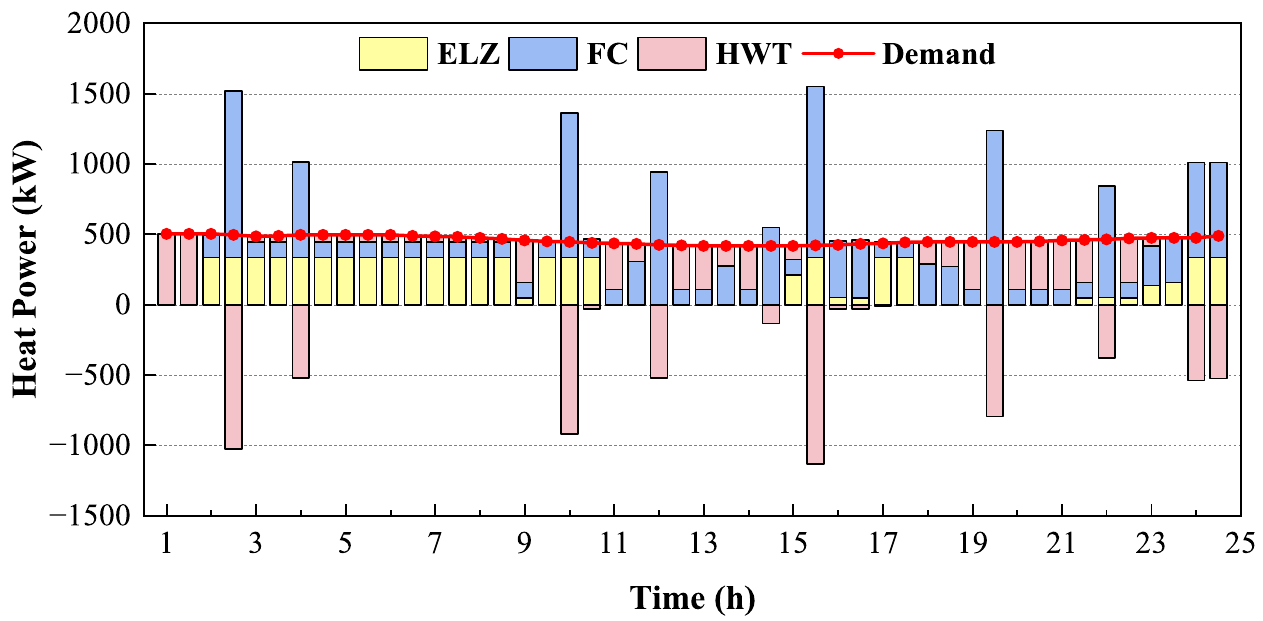}
	}\vspace{-10pt}\\
	\subfloat[Storage level of multiple energy systems.]{
		\label{sub-s}
		\includegraphics[width=0.4\textwidth]{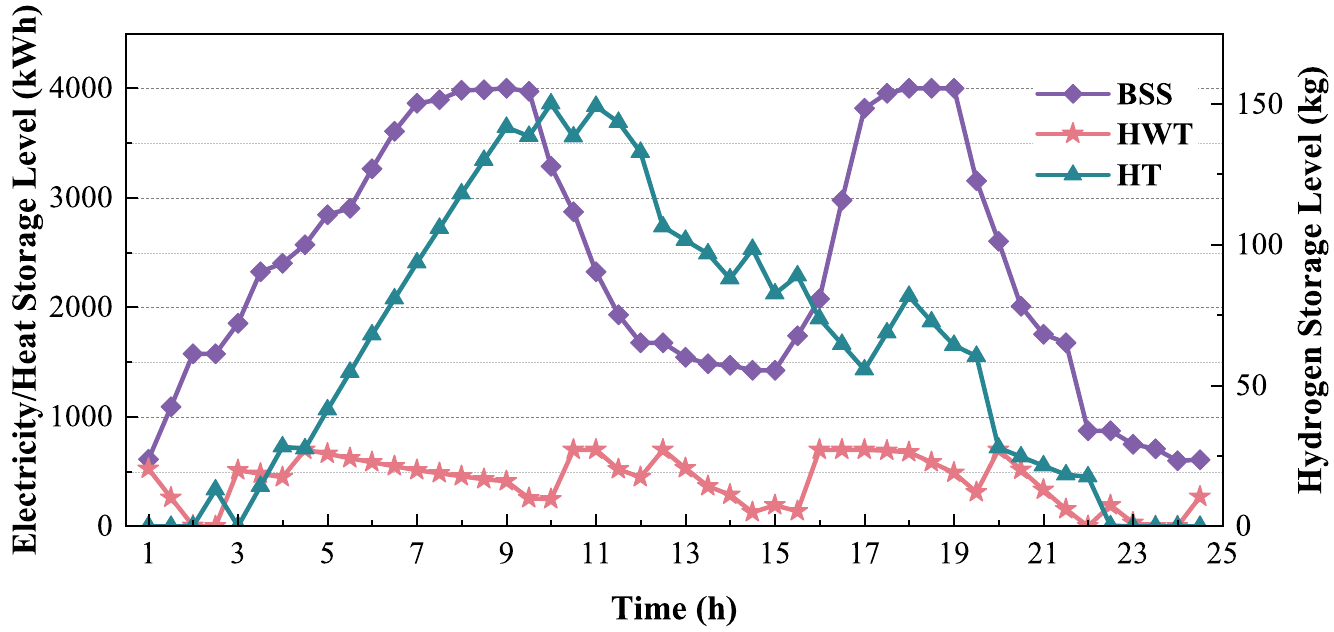}
	}
	\caption{MEMG's dispatch scheme.}
	\label{pre-result}
	\vspace{-20pt}
\end{figure}

\subsubsection{Out-of-Sample Performance} We have evaluated the \textit{out-of-sample performance} \cite{mohajerin2018data} of MEMG's dispatch scheme on another test set that comprises 1000 scenarios. The solutions by employing SP and RO have also been presented. We should note that in our numerical studies, the SP scheme specifically refers to the one derived through sample-average approaximation (SAA) \cite{stochastic1} with the given empirical distribution $\mathbb{P}^{\rm e}$, and, for RO, we directly take the sample space $\Xi$ being the uncertainty set, based on which the optimal worst-case solution is attained.

Table \ref{tab:oos} displays the results. We have selected six indices for analysis, including the out-of-sample cost (OOSC), probabilities of electricity and heat load shedding (PELS and PHLS), expected electrical and heat energy not supplied (EEENS and EHENS), and expected net CO\textsubscript{2} emissions (ENCE). Their definitions can be found in Appendix \ref{index-oos}. In Table \ref{tab:oos}, the dispatch schemes obtained by DRO and SP have negative OOSC values, indicating that they are profitable. Compared to SP, since DRO accounts for the ambiguity of the underlying probability distribution $\mathbb{P}$ and hedges against it in a robust way, the OOSC of DRO is \$1.68 higher, while the PELS and PHLS decrease, respectively, from 8.7\% to 0.3\% and from 12.1\% to 1.2\%. Besides, DRO's EEENS and EHENS are 0.08 kWh and 0.54 kWh, representing declines of 99.33\% and 95\% than those of SP. They demostrate DRO's advantages in uncertainty modeling. As to RO, although it meets all the energy demand, the OOSC incurred, i.e., \$440.19, is substantial. The over-conservativeness makes RO not recommended for practical modeling of MEMG's dispatch. On the other hand, DRO's scheme possesses a lower ENCE (related to electricity transactions) than those of SP and RO, suggesting DRO's ability to manage uncertain risks also contributes to the decarbonization of the industrial park.

\begin{table}[!t]
  \centering
  \caption{Out-of-Sample Evaluation}
  \vspace{-5pt}
  \resizebox{0.8\linewidth}{!}{%
  	\renewcommand{\arraystretch}{1.2}
    \begin{tabular}{ccccccc}
    \toprule
    Approach & OOSC (\$) & PELS  & PHLS  & EEENS (kWh)  & EHENS (kWh) & ENCE (kg) \\
    \midrule
    DRO   & -590.97 & 0.3\% & 1.2\% & 0.08  & 0.54  & 862.48 \\
    SP    & -592.65 & 8.7\% & 12.1\% & 11.87  & 10.80   & 867.21 \\
    RO    & 440.19 & 0     & 0     & 0     & 0     & 4539.14 \\
    \bottomrule
    \end{tabular}%
}\vspace{-10pt}
  \label{tab:oos}%
\end{table}%

\subsubsection{Analysis on ELZ's model}
\begin{figure}[]
	\centering
	\includegraphics[width=0.4\textwidth]{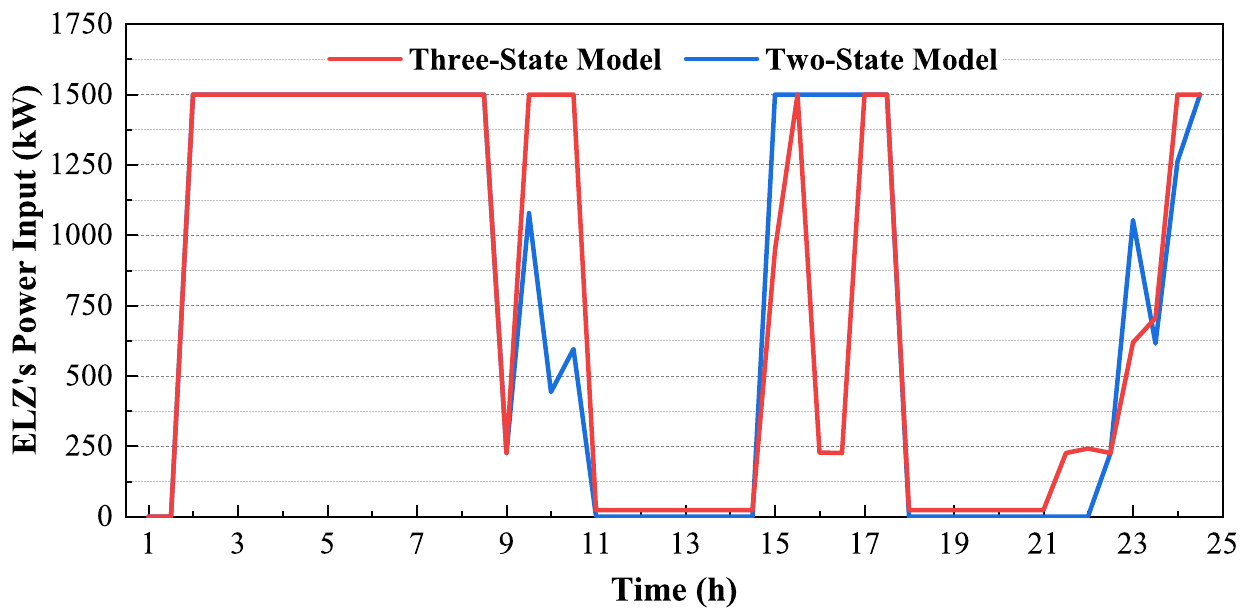}
	\vspace{-10pt}
	\caption{ELZ's dispatch schemes by using three- and two-state models.}
	\label{elz-result}
	\vspace{-15pt}
\end{figure}

Fig. \ref{elz-result} presents the dispatch results of ELZ by using our three-state model (3SM) and the traditional two-state model (2SM). It can be observed that, from 11 AM to 2:30 PM and from 6 to 9 PM, ELZ is in the standby state when using 3SM. If we adopt 2SM, ELZ remains idle during the same periods yet has to wait until 10 PM to operate due to the cold startup. In contrast to 2SM, 3SM reduces ELZ's startup and shutdown costs by \$39.20 and lowers OOSC by \$12.41, reflecting the value of the operating flexibility it provides for ELZ.

We have also compared our P2HH model with the traditional power-to-hydrogen (P2H) one for ELZ, in which the waste heat recovery is not taken into account. By conducting out-of-sample evaluation for MEMG's dispatch scheme with P2H model, it appears an OOSC of \$215.86, a PHLS of 99.1\%, an EHENS of 545.95 kWh, and an ENCE of 3,104.30 kg, which are much worse than those by using P2HH model in Table \ref{tab:oos}. Hence, they demonstrate the necessity to collect ELZ's waste heat to improve MEMG's operating efficiency.\vspace{-5pt}

\subsection{Impact of Wasserstein Radius}

Fig. \ref{r-result} depicts the variation of \textbf{DRD-MEMG}'s optimal objective value $w$ (a.k.a, the in-sample cost) and OOSC with respect to the radius $r$ of the Wasserstein ball $\mathcal{P}$. The optimal objective values ($w_{\rm sp}$ and $w_{\rm ro}$) obtained by SP and RO are also presented for reference. As can be visualized in the figure, $w$ typically increases with $r$, which is consistent with Proposition \ref{prop-r}, and is bounded by $w_{\rm sp}$ (-\$611.69) and $w_{\rm ro}$ (\$815.45), demonstrating that DRO is a trade-off between SP and RO.  Besides, $r$ manifests the risk preference of the MEMG's operator. A negative OOSC (revenue) is derived if $r$ is smaller than around 5.4; otherwise, OOSC is positive (expense). An optimistic MEMG operator may be inclined to select a small $r$, whereas a pessimistic one may prefer a relatively large $r$ to attain a risk-averse dispatch. Note that there is a huge range for OOSC from -\$592.78 through \$440.19, almost growing by \$1,032.97. Hence, Wasserstein radius $r$ serves as an important parameter for the economic benefit of MEMG's dispatch. Moreover, unlike the continuous growth of $w$, it is interesting to observe a deferred worsening (increase) in OOSC. \vspace{-5pt}

\begin{figure}[]
	\centering
	\includegraphics[width=0.4\textwidth]{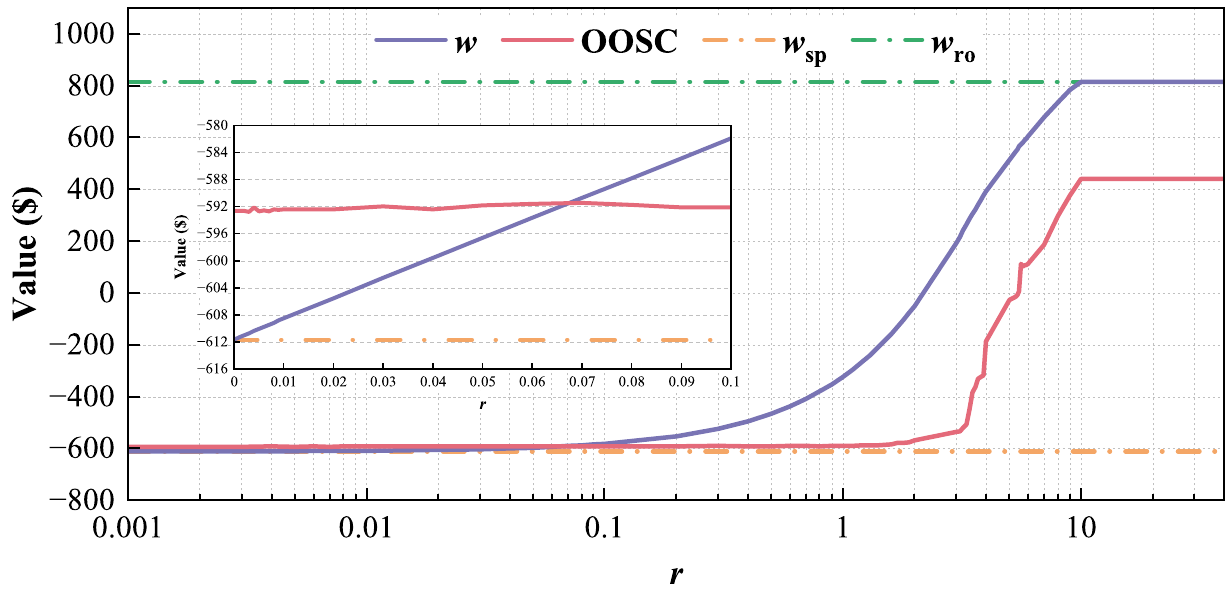}
	\vspace{-10pt}
	\caption{Impact of Wasserstein radius.}
	\label{r-result}
	\vspace{-15pt}
\end{figure}

\begin{remark}
	According to Proposition \ref{prop-psp}, each component of a generated scenario $\bm{\xi}_s$  has only three candidate values, i.e., $\overline{\xi}^{i}$, $\underline{\xi}^{i}$, and $\xi_{s}^{{\rm e},i}$. On one hand, when $r=0$, i.e., the ambiguity-free case, we have $\xi_{s}^{i}=\xi_{s}^{{\rm e},i}$, resulting in the solution to SAA-based SP. On the other hand, when $r$ exceeds a critical threshold (e.g., $r\geq10$ in Fig. \ref{r-result}),  \eqref{mmp-5} in \textbf{MMP} and \eqref{pmp-3} in \textbf{PMP} can always be satisfied, rendering their dual multipliers $\beta=0$. In such a case, the generated scenarios from \textbf{PSP}$_s$ are further restricted in $\times_{i=1}^{m_{\xi}}\{\overline{\xi}^{i},\underline{\xi}^{i}\}$. Also,  C\&CG-DRO(CG) reduces to the classical C\&CG for two-stage RO problems  \cite{zeng2013solving}. Hence, the derived solution is equivalent to that of RO.\vspace{-5pt}
\end{remark}

\subsection{Computational Performance}

Computational tests on C\&CG-DRO(CG) to solve \textbf{DRD-MEMG} with different values of $r$ and $S$ have been performed and analyzed. Two other typical algorithms for DRO, i.e., the primary C\&CG implementation to solve the duality-based DRO reformulation \cite{gamboa2021decomposition,saif2021data} (referred to as basic C\&CG) and the Benders-dual algorithm  \cite{gamboa2021decomposition,duque2022distributionally} (referred to as Benders), have been selected for comparison. Numerical results are given in Table. \ref{tab:com-result}. Columns ``UB'', ``LB'', ``Gap'', ``Iter'', ``$|\Upsilon|$'', and ``Time'' report, respectively, the UB, LB, relative solution gap, number of (outer) iterations, total number of generated scenarios ($|\Upsilon|\triangleq\sum_{s=1}^S|\Upsilon_s|$),  and solution time, all upon termination. For Benders, Column ``Cut'', representing the number of cutting planes, replaces its counterpart Column ``$|\Upsilon|$''. We note that parallelization has been implemented across all these algorithms. For ease of exposition, we denote the instance with radius $r$ and $S$ samples by $\mathcal{I}_r^S$. 

Based on the results in Table \ref{tab:com-result}, we can make a few interesting observations and develop a set of critical insights regarding these algorithms: 
\begin{itemize}
	\item C\&CG-DRO(CG) showcases a strong and scalable  solution capacity for \textbf{DRD-MEMG}. It handles all test instances within 4,000 seconds. Note that C\&CG-DRO(CG) derives optimal solutions with 2 or 3  iterations, indicating that the generated scenarios through inner CG are highly effective in capturing worst-case distributions for non-trivial first stage decisions. A general trend is that the solution time increases both with $r$ and $S$. We believe this is quite intuitive, noting that a larger $r$ indicates a larger solution space and a larger $S$ leads to larger \textbf{MMP}/\textbf{PMP} and more \textbf{PSP}$_s$'s.

	\item Between C\&CG-DRO(CG) and basic C\&CG, the former one clearly outperforms the latter one. For the very small-scale instances with just 3 or 5 samples, they are basically comparable. Nevertheless, for large-scale ones, e.g., $\mathcal{I}_{0.5}^{400}$ and $\mathcal{I}_{1}^{400}$, C\&CG-DRO(CG) nearly achieves a speedup of one order of magnitude. In fact, basic C\&CG can be seen as a special case of C\&CG-DRO(CG) if we simply perform one CG iteration for every \textbf{MMP} \cite{lu2024}. Hence, the deep search achieved by executing a complete CG procedure provides an accurate evaluation of \textbf{WCEP} and contributes to generating all critical scenarios to strengthen \textbf{MMP}. Actually, in each CG execution, we often observe that the number of scenarios of non-zero probabilities is often $S+1$, reflecting the result in Remark \ref{sparsity}-ii. Also, the scenario selection strategy in Remark \ref{remark-strategy}-iii helps accelerate the solution procedure through reducing the redundancy of \textbf{MMP}. Moreover, C\&CG-DRO(CG) demonstrates a strong numerical stability, noting that the gap between UB and LB generally reduces to zero regardless of the $\varepsilon=0.5\%$ optimality tolerance.

	\item Benders is completely dominated by C\&CG-DRO(CG) and  basic C\&CG, displaying a very weak computational capacity. When it derives an optimal solution, the overall solution time could be 2--3 orders of magnitude longer than those of C\&CG-DRO(CG) and  basic C\&CG. Actually,  when $r=5$, Benders fails to generate a high-quality feasible solution within the time limit for all instances.
\end{itemize}

\begin{table*}[htbp]
	\centering
	\caption{Results of Computational Tests}
	\vspace{-5pt}
	\resizebox{0.88\linewidth}{!}{%
		\begin{threeparttable}
			\renewcommand{\arraystretch}{0.9}
			\begin{tabular}{ccccccccccccccccccccccc}
				\toprule
				\multirow{1.4}[4]{*}{$r$} & \multirow{1.4}[4]{*}{$S$} & \multirow{21}[10]{*}{} & \multicolumn{6}{c}{C\&CG-DRO(CG)}               & \multirow{21}[10]{*}{} & \multicolumn{6}{c}{Basic C\&CG}                 & \multirow{21}[10]{*}{} & \multicolumn{6}{c}{Benders} \\
				\cmidrule{4-9}\cmidrule{11-16}\cmidrule{18-23}          &       &       & UB    & LB    & Gap   & Iter  & $|\Upsilon|$   & Time (s) &       & UB    & LB    & Gap   & Iter  & $|\Upsilon|$   & Time (s) &       & UB    & LB    & Gap   & Iter  & Cut   & Time (s) \\
				\cmidrule{1-2}\cmidrule{4-9}\cmidrule{11-16}\cmidrule{18-23}    \multirow{6.4}[2]{*}{0.5} & 3     &       & -556.01  & -556.33  & 0.06\% & 2     & 6     & 8.45  &       & -555.81  & -558.20  & 0.43\% & 4     & 12    & 9.11  &       & -554.86  & -557.62  & 0.49\% & 228   & 684   & 1651.45  \\
				& 5     &       & -467.66  & -467.66  & 0     & 2     & 9     & 10.40  &       & -467.27  & -468.62  & 0.29\% & 4     & 20    & 13.36  &       & -466.63  & -468.44  & 0.39\% & 176   & 880   & 2001.39  \\
				& 20    &       & -482.15  & -482.15  & 0     & 2     & 26    & 37.75  &       & -481.48  & -483.83  & 0.48\% & 4     & 80    & 82.52  &       & -480.95  & -483.27  & 0.48\% & 70    & 1400  & 1284.46  \\
				& 50    &       & -480.56  & -480.76  & 0.04\% & 2     & 68    & 178.47  &       & -480.53  & -480.83  & 0.06\% & 5     & 250   & 676.27  &       & -479.36  & -481.54  & 0.45\% & 42    & 2100  & 1260.05  \\
				& 100   &       & -474.32  & -474.32 & 0     & 2     & 109   & 387.27  &       & -474.24  & -474.31 & 0.01\% & 5     & 500   & 1362.67  &       & -473.12  & -475.32 & 0.46\% & 34    & 3400  & 2156.95  \\
				& 200   &       & -481.07  & -481.07  & 0     & 2     & 237   & 583.63  &       & -480.99  & -481.08  & 0.02\% & 5     & 1000  & 3839.14  &       & -480.06  & -481.99  & 0.40\% & 33    & 6600  & 4425.54  \\
				& 400   &       & -482.72  & -482.72  & 0     & 2     & 436   & 1407.52  &       & -455.38  & -495.86  & 8.16\% & 3     & 1200  & T (12906.26) &       & -481.10  & -484.98  & 0.80\% & 25    & 10000 & T \\
				\cmidrule{1-2}\cmidrule{4-9}\cmidrule{11-16}\cmidrule{18-23}    \multirow{6.4}[2]{*}{1} & 3     &       & -414.84  & -414.84  & 0     & 3     & 7     & 11.05  &       & -413.84  & -415.06  & 0.29\% & 6     & 18    & 14.34  &       & -413.51  & -415.54  & 0.49\% & 229   & 687   & 1753.28  \\
				& 5     &       & -322.64  & -322.64  & 0     & 2     & 9     & 7.86  &       & -322.13  & -322.43  & 0.09\% & 5     & 25    & 17.92  &       & -321.33  & -322.94  & 0.50\% & 187   & 935   & 2269.19  \\
				& 20    &       & -340.28  & -340.28  & 0     & 2     & 31    & 38.43  &       & -339.66  & -339.81  & 0.05\% & 5     & 100   & 173.20  &       & -339.00      & -340.25      & 0.37\%      & 83      & 1660      & 2022.99 \\
				& 50    &       & -339.13  & -339.13  & 0     & 3     & 97    & 433.72  &       & -338.86  & -339.09  & 0.07\% & 5     & 250   & 723.05  &       & -338.37      & -339.63      & 0.37\%      & 52      & 2600      & 2489.97 \\
				& 100   &       & -332.10  & -332.47  & 0.11\% & 2     & 150   & 547.21  &       & -331.93  & -332.06  & 0.04\% & 5     & 500   & 1585.28  &       & -331.57      & -332.59      & 0.31\%      & 44      & 4400      & 4955.64 \\
				& 200   &       & -338.32  & -339.66  & 0.40\% & 2     & 300   & 883.36  &       & -338.91  & -338.99  & 0.02\% & 5     & 1000  & 5363.19  &       & -334.06      & -342.90      & 2.58\%      & 28      & 5600      & T \\
				& 400   &       & -340.18  & -341.20  & 0.30\% & 2     & 600   & 2420.65  &       & -308.04  & -339.41  & 15.62\% & 3     & 1200   & T (17357.74) &       & -320.62      & -360.11      & 10.97\%      & 13      & 5200      & T \\
				\cmidrule{1-2}\cmidrule{4-9}\cmidrule{11-16}\cmidrule{18-23}    \multirow{4.5}[2]{*}{5} & 3     &       & 446.00  & 446.00  & 0     & 3     & 7     & 24.09  &       & 446.65  & 446.00  & 0.15\% & 4     & 12    & 29.43  &       & 467.04  & 430.29  & 8.54\% & 262   & 786   & T \\
				& 5     &       & 498.86  & 498.86  & 0     & 3     & 11    & 39.58  &       & 499.94  & 498.86  & 0.22\% & 4     & 20    & 43.52  &       & 519.65  & 478.66  & 8.56\% & 193   & 965   & T \\
				& 20    &       & 483.94  & 483.94  & 0     & 3     & 37    & 245.34  &       & 484.90  & 483.43  & 0.31\% & 4     & 80    & 601.95  &       & 513.22  & 461.36  & 11.24\% & 110   & 2200  & T \\
				& 50    &       & 479.09  & 479.09  & 0     & 3     & 102   & 1011.54  &       & 480.33  & 479.35  & 0.20\% & 4     & 200   & 1890.29  &       & 526.23  & 442.49  & 18.92\% & 73    & 3650  & T \\
				& 100   &       & 483.43  & 483.43  & 0     & 3     & 185   & 3958.62  &       & 517.46  & 450.83  & 14.78\% & 3     & 300   & T (7726.95) &       & 551.26  & 423.37  & 30.21\% & 40    & 4000  & T \\
				\bottomrule
			\end{tabular}%
			\begin{tablenotes}
				\item ``T" implies the test instance terminates due to the time limit, i.e., 7200 s. In such situation, for basic C\&CG, we further provide the actual solution time in brackets.
			\end{tablenotes}
		\end{threeparttable}
	}\vspace{-15pt}
	\label{tab:com-result}%
\end{table*}%

Finally, we have examined the effectiveness of parallel computing for CG. Let $\mathfrak{t}(q)$ be the total solution time of C\&CG-DRO(CG) with $q$ threads being used for solving \textbf{PSP}$_s$'s in parallel. Fig. \ref{fig:parallel} depicts the speedup  ($\frac{\mathfrak{t}(1)}{\mathfrak{t}(q)}$) with respect to the number of threads. It is clear that the speedup increases along with a larger number of threads, demonstrating the nontrivial advantage of parallel computation. Typically, more threads yield diminishing returns, and, according to Amdahl's law, there exists theoretically  a limit for speedup that is related to the unparallelizable portion of C\&CG-DRO(CG). Additionally, the speedup decreases with Wasserstein radius, noting that \textbf{MMP}'s, an unparallelizable portion, are  more challenging to compute for larger $r$'s.   \vspace{-15pt}

\begin{figure}[]
	\centering
	\includegraphics[width=0.4\textwidth]{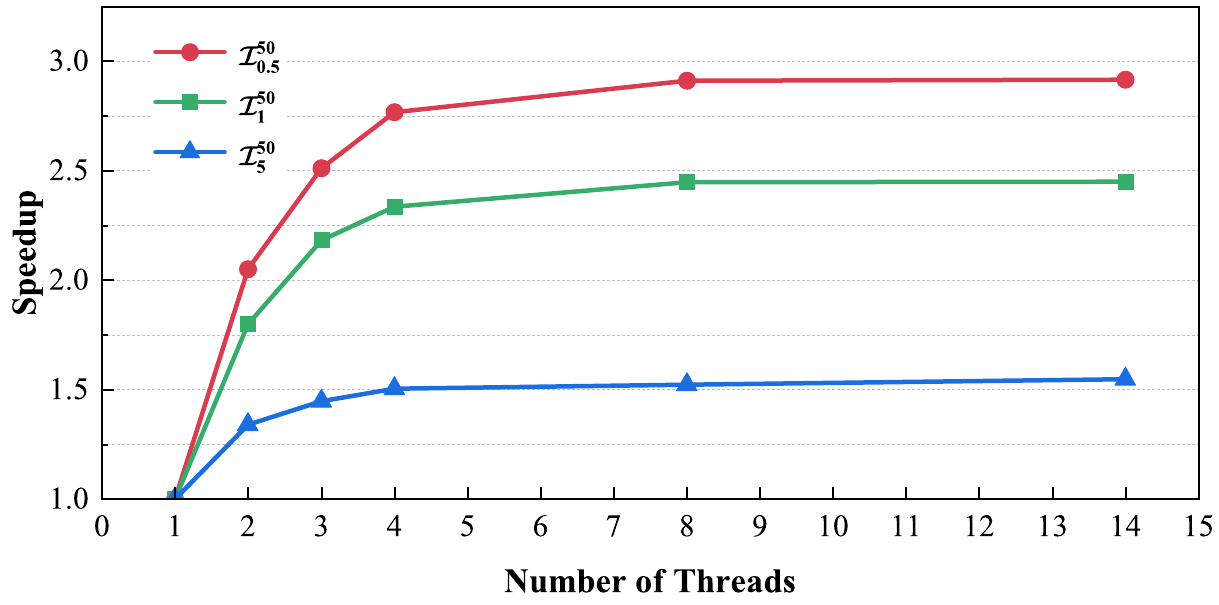}
	\vspace{-5pt}
	\caption{Parallelization speedup of C\&CG-DRO(CG) for $\mathcal{I}_{0.5}^{50}$, $\mathcal{I}_{1}^{50}$, and $\mathcal{I}_{5}^{50}$.}
	\label{fig:parallel}
	\vspace{-15pt}
\end{figure}

\section{Conclusion}\label{conclusion}
This paper has proposed a data-driven adaptive DRD formulation for MEMG, considering supply and demand uncertainties. A Wasserstein ambiguity set has been constructed to capture the unknown distributions around the empirical distribution, which is supported by available data. To address the computational burden, we have customized and developed a C\&CG-DRO(CG) algorithm for exact and high-efficient solution. Numerical studies have demonstrated the effectiveness of our DRD approach and elucidated the interrelationship of it with the traditional SP- and RO-based dispatch approaches. Moreover, the superiority of C\&CG-DRO(CG) has been verified by comparing it to two popular algorithms in the literature for DRO, i.e., basic C\&CG and the Benders-dual method.

\bibliographystyle{IEEEtran}
\bibliography{BibDRO}

\clearpage

\appendices

\section{Proof of Corollary \ref{thm-converge-cg}}\label{prf-convergence-cg}
\begin{proof}
	Assume that, initially, $\Omega_s(\hat{\mathbf{x}})=\emptyset$ for all $s\in[S]$. First, we state the following claim. 
	
	\begin{cla}\label{cla-cg}
		In each iteration, CG either solves \textbf{WCEP} to an optimal solution or, for some $s'\in[S]$, generates a new scenario $\bm{\xi}_{s'}$  that does not belong to $\Omega_{s'}(\hat{\mathbf{x}})$.
	\end{cla}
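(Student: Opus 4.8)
The plan is to prove Claim~\ref{cla-cg} as the textbook optimality certificate of column generation, resting on LP duality between \textbf{PMP} and the infinite-dimensional \textbf{WCEP}. The pivotal observation I would exploit is that the objective of \textbf{PSP}$_s$ in \eqref{psp-1} is exactly the \emph{reduced cost} of a candidate scenario $\bm{\xi}_s$ evaluated at the dual multipliers $(\hat{\alpha}_s,\hat{\beta})$ returned by \textbf{PMP}. Concretely, I would first write the dual of the restricted LP \textbf{PMP}: the dual constraint attached to each primal variable $\pi_{\bm{\xi}_s}$, for $\bm{\xi}_s\in\Omega_s(\hat{\mathbf{x}})$, is $\alpha_s+\pi_s^{\rm e}\|\bm{\xi}_s-\bm{\xi}_s^{\rm e}\|_1\beta\geq\pi_s^{\rm e}Q(\hat{\mathbf{x}},\bm{\xi}_s)$, which is structurally identical to the optimality cut \eqref{mmp-12}. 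Since $(\hat{\alpha}_s,\hat{\beta})$ is dual-optimal for \textbf{PMP}, this inequality holds for every scenario already carried in $\Omega_s(\hat{\mathbf{x}})$; equivalently, the reduced cost $\pi_s^{\rm e}(Q(\hat{\mathbf{x}},\bm{\xi}_s)-\hat{\beta}\|\bm{\xi}_s-\bm{\xi}_s^{\rm e}\|_1)-\hat{\alpha}_s\leq 0$ for all such $\bm{\xi}_s$.

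Next I would split into the two alternatives of the claim. Suppose $\mu_s(\hat{\mathbf{x}})\leq 0$ for all $s\in[S]$. Because $\mu_s(\hat{\mathbf{x}})$ is the maximum of the reduced cost over $\bm{\xi}_s\in\Xi$, the dual inequality above then holds not only over $\Omega_s(\hat{\mathbf{x}})$ but over the entire sample space $\Xi$, so $(\hat{\alpha}_s,\hat{\beta})$ is dual-feasible for the full \textbf{WCEP} LP \eqref{wcep-4}--\eqref{wcep-5}, whose dual objective is $\sum_s\alpha_s+r\beta$. The incumbent \textbf{PMP} solution, extended by zeros outside $\cup_s\Omega_s(\hat{\mathbf{x}})$, is primal-feasible for \textbf{WCEP} with value $\underline{v}(\hat{\mathbf{x}})$, and finite-LP strong duality inside \textbf{PMP} gives $\underline{v}(\hat{\mathbf{x}})=\sum_s\hat{\alpha}_s+r\hat{\beta}$. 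Chaining the bounds $v(\hat{\mathbf{x}})\leq\sum_s\hat{\alpha}_s+r\hat{\beta}=\underline{v}(\hat{\mathbf{x}})\leq v(\hat{\mathbf{x}})$ (the first from weak duality for the continuous LP, the last from \textbf{PMP} being a restriction) forces equality, certifying that \textbf{WCEP} has been solved to optimality.

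In the complementary case some $s'$ satisfies $\mu_{s'}(\hat{\mathbf{x}})>0$, attained at the optimal $\hat{\bm{\xi}}_{s'}$ of \textbf{PSP}$_{s'}$, which exists and lies in the finite candidate set of Proposition~\ref{prop-psp}. I would argue by contradiction: if $\hat{\bm{\xi}}_{s'}\in\Omega_{s'}(\hat{\mathbf{x}})$, then $\hat{\bm{\xi}}_{s'}$ is one of the columns already present in \textbf{PMP}, so by the first paragraph its reduced cost is $\leq 0$; but that reduced cost equals $\mu_{s'}(\hat{\mathbf{x}})>0$, a contradiction. Hence the freshly generated $\hat{\bm{\xi}}_{s'}$ is genuinely new, which is precisely the second alternative.

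The step I expect to be the main obstacle is making the optimality certificate of the first case fully rigorous in the \emph{infinite-dimensional} setting, since the dual of \textbf{WCEP} carries one constraint per $\bm{\xi}_s\in\Xi$ and thus requires weak/strong duality for a continuous LP rather than a finite one. I would lean on Theorem~\ref{eq-summation} (and the at-most-$S+1$-scenario support noted in Remark~\ref{sparsity}-ii), which guarantees the supremum is attained on a finite discrete distribution, so that the optimality test collapses to the finitely verifiable condition $\mu_s(\hat{\mathbf{x}})\leq 0$; the remaining bookkeeping---matching objective values and confirming primal feasibility of the zero-extended \textbf{PMP} solution within \textbf{WCEP}---is routine.
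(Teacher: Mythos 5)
Your proposal is correct and follows essentially the same route as the paper's proof: the second alternative is established by exactly the paper's argument (dual optimality of \textbf{PMP} forces non-positive reduced costs for every column already in $\Omega_{s'}(\hat{\mathbf{x}})$, so a scenario with $\mu_{s'}(\hat{\mathbf{x}})>0$ must be new), while the first alternative is something the paper simply asserts as the standard column-generation optimality certificate. Your extra work---verifying dual feasibility of $(\hat{\alpha}_s,\hat{\beta})$ for the full \textbf{WCEP}, chaining weak duality for the semi-infinite LP with strong duality of the finite \textbf{PMP}, and invoking Theorem~\ref{eq-summation} to keep the certificate finitely verifiable---just makes explicit the justification the paper leaves implicit, and is sound.
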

	\begin{proof}[Proof of Claim \ref{cla-cg}]
		We note that solving $\textbf{PMP}$ equivalently solves \textbf{WCEP} once \textbf{PSP}$_{s}$ returns non-positive reduced cost for all $s\in [S]$. Hence, it is sufficient to consider the case where \textbf{PSP}$_{s'}$ generates a scenario $\bm{\xi}_{s'}'$ of a positive reduced cost for some $s'$, i.e., 
		\begin{align}
			\mu(\hat{\mathbf{x}})=\pi_s^{\rm e}\left(Q(\hat{\mathbf{x}},\bm{\xi}_{s'}'\right)-\hat{\beta}\left\|\bm{\xi}_{s'}'-\bm{\xi}_s^{\rm e}\right\|_1)-\hat{\alpha}_s>0.\label{zzz}
		\end{align}
		
		Since $\textbf{PMP}$ has been solved to  optimality, for all scenarios $\bm \xi_{s}\in \Omega_{s}(\hat{\mathbf{x}})$ ($\forall s\in[S]$), their corresponding reduced costs are less than or equal to zero. Hence, we can conclude that $\bm{\xi}_{s'}'$ is not in $\Omega_{s'}(\hat{\mathbf{x}})$. 
	\end{proof}

	 According to the proof of Proposition \ref{prop-psp}, the number of the extreme points of $\widetilde{\Xi}_s$ is finite, i.e.,  $3^{m_{\xi}}$. Also, based on the construction of $\textbf{PSP}_s$ presented in Corollary \ref{cor-PSP}, the optimal $(\hat{\bm{\xi}}_s,\hat{\mathbf{z}}_s)$ is an extreme point of $\widetilde{\Xi}_s$. Together with Claim \ref{cla-cg}, it is clear that the number of CG’s iterations is bounded by $3^{m_{\xi}}\cdot S$. That completes the proof.
\end{proof}

\section{Overall flow of C\&CG-DRO(CG)}\label{alg-overflow}
Fig. \ref{ccg-dro} describes the overall flow of C\&CG-DRO(CG). The dashed red box highlights the part that can be executed in parallel to accelerate the computation.

\begin{figure*}[]
	\centering
	\includegraphics[scale=0.38]{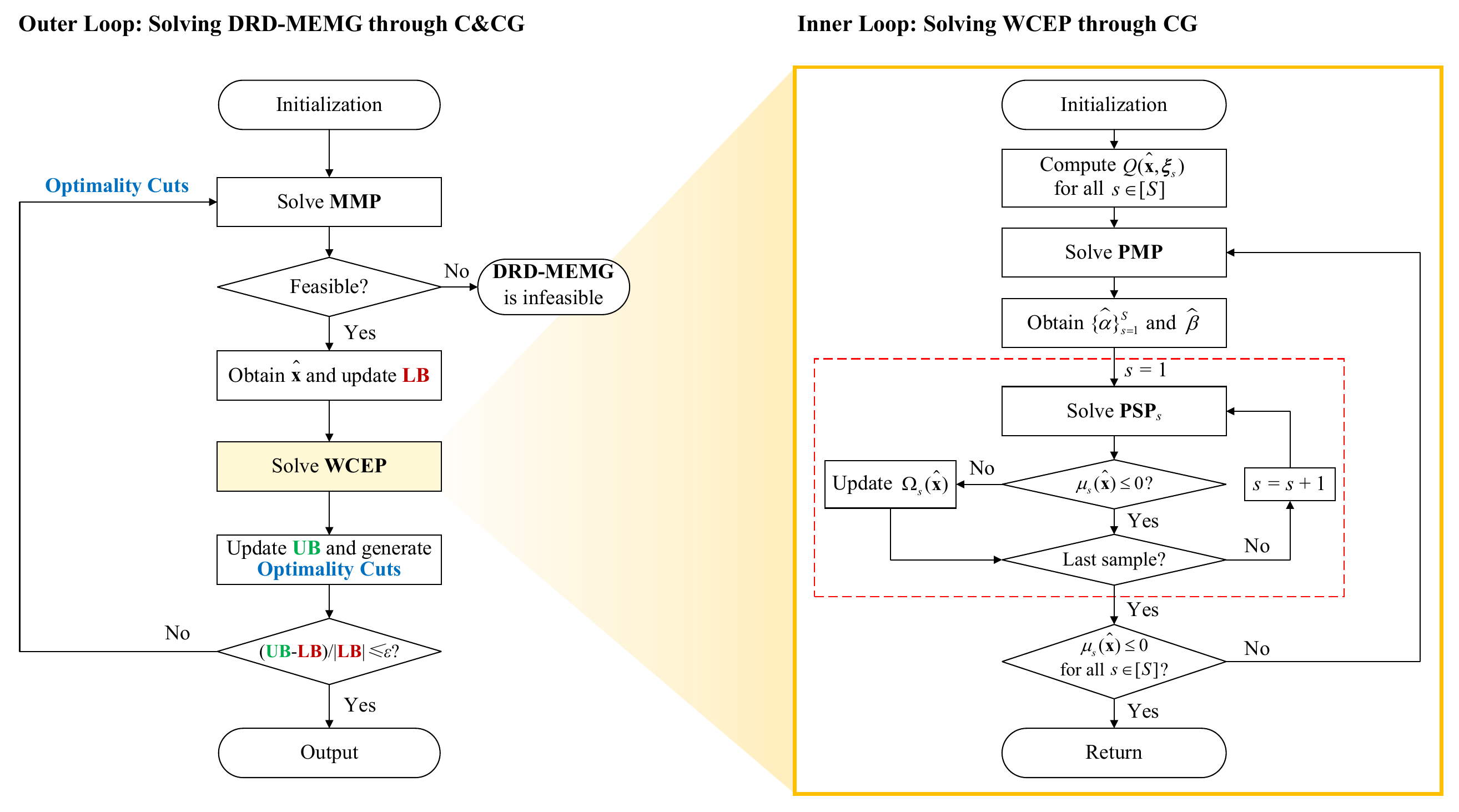}
	\caption{Overall flow of C\&CG-DRO(CG)}
	\label{ccg-dro}
\end{figure*}

\section{Proof of Corollary \ref{thm-converge-ccg}}\label{prf-convergence-ccg}
\begin{proof}
Assume that $\Upsilon_s$'s are set to $\emptyset$ at initialization. Consider the execution of C\&CG-DRO(CG) at some iteration, where $\Upsilon_s$'s ($\forall s\in[S]$) are the available scenario sets, $\hat{\mathbf{x}}$ is the optimal first-stage solution to \textbf{MMP},  and $\widehat{\Omega}_s(\hat{\mathbf{x}})$ is the set of  scenarios attained by solving \textbf{PSP}$_s$ to non-positive reduced cost. 
\begin{cla}\label{cla-ccg}
	If $\widehat{\Omega}_s(\hat{\mathbf{x}})\subseteq\Upsilon_s$ for all $s\in[S]$, we have $\mathrm{UB}=\mathrm{LB}$.
\end{cla}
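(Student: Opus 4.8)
The plan is to sandwich the two bounds. By construction, the relations $\mathrm{LB}=\underline{w}\le w$ in \eqref{def-lb} and $\mathrm{UB}\ge w$ in \eqref{def-ub} hold at every stage of the algorithm, so it suffices to establish the reverse inequality $\mathrm{LB}\ge\mathrm{UB}$; together these force $\mathrm{UB}=\mathrm{LB}$. I would obtain $\mathrm{LB}\ge\mathrm{UB}$ by showing that, under the hypothesis, the value $\underline{w}$ returned by \textbf{MMP} already equals $\mathbf{c}^{\intercal}\hat{\mathbf{x}}+v(\hat{\mathbf{x}})$, which is precisely the quantity bounding $\mathrm{UB}$ from above in \eqref{def-ub}.

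First I would pin down what $\underline{w}$ computes. Writing $v_{\Upsilon}(\hat{\mathbf{x}})$ for the value of the inner maximization \eqref{mmp-3}--\eqref{mmp-5}, i.e.\ the restriction of \textbf{WCEP} to the scenario pools $\Upsilon_s$, I claim that at the \textbf{MMP} optimum $\underline{w}=\mathbf{c}^{\intercal}\hat{\mathbf{x}}+v_{\Upsilon}(\hat{\mathbf{x}})$. The key observation is that the objective minimizes $\eta$ while the inner problem maximizes a nonnegative combination of the $\eta_{\bm{\xi}_s}$ (the weights $\pi_s^{\rm e}\pi_{\bm{\xi}_s}$ are nonnegative), so each recourse block \eqref{mmp-6}--\eqref{mmp-7} may be taken at its optimum, giving $\eta_{\bm{\xi}_s}=Q(\hat{\mathbf{x}},\bm{\xi}_s)$ for every $\bm{\xi}_s\in\Upsilon_s$; here relatively complete recourse guarantees these values are finite. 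The single-level constraints \eqref{mmp-11}--\eqref{mmp-12} are exactly the strong LP dual of this inner maximization, so $\hat{\eta}=v_{\Upsilon}(\hat{\mathbf{x}})$ and the identity follows. Verifying this reduction carefully, in particular that the dualization in \eqref{mmp-9}--\eqref{mmp-12} is tight and that the implicit $\min_{\mathbf{y}}\max_{\pi}$ may be swapped because minimizing each $\mathbf{y}_{\bm{\xi}_s}$ benefits the maximizer for every $\pi$, is the main obstacle.

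It then remains to show $v_{\Upsilon}(\hat{\mathbf{x}})=v(\hat{\mathbf{x}})$. Since every $\Upsilon_s\subseteq\Xi$, the restricted problem is a restriction of the full \textbf{WCEP}, so $v_{\Upsilon}(\hat{\mathbf{x}})\le v(\hat{\mathbf{x}})$. For the reverse, Corollary \ref{thm-converge-cg} ensures that the inner CG terminates with the \textbf{PMP} over $\widehat{\Omega}_s(\hat{\mathbf{x}})$ attaining the exact value $v(\hat{\mathbf{x}})$; the hypothesis $\widehat{\Omega}_s(\hat{\mathbf{x}})\subseteq\Upsilon_s$ for all $s$, combined with the monotonicity of the maximization \eqref{mmp-3}--\eqref{mmp-5} in the scenario pool (enlarging the pool only enlarges the feasible set of $\pi$, hence cannot decrease the optimum), yields $v(\hat{\mathbf{x}})\le v_{\Upsilon}(\hat{\mathbf{x}})$. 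Therefore $v_{\Upsilon}(\hat{\mathbf{x}})=v(\hat{\mathbf{x}})$, so $\mathrm{LB}=\underline{w}=\mathbf{c}^{\intercal}\hat{\mathbf{x}}+v(\hat{\mathbf{x}})\ge\mathrm{UB}$, and combining with $\mathrm{LB}\le\mathrm{UB}$ completes the argument.
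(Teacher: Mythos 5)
Your proof is correct and follows essentially the same route as the paper's: both arguments sandwich the bounds by establishing $\mathrm{LB}\geq\mathbf{c}^{\intercal}\hat{\mathbf{x}}+v(\hat{\mathbf{x}})\geq\mathrm{UB}$, relying on the inclusion $\widehat{\Omega}_s(\hat{\mathbf{x}})\subseteq\Upsilon_s$, the termination condition of CG guaranteeing that the \textbf{PMP} over $\widehat{\Omega}_s(\hat{\mathbf{x}})$ attains $v(\hat{\mathbf{x}})$, and the tightness of the single-level dualization of \textbf{MMP}. The only difference is presentational: the paper works on the dual side, dualizing \textbf{PMP} by strong duality and noting that the larger pool $\Upsilon_s$ imposes more constraints in \eqref{mmp-11}--\eqref{mmp-12}, whereas you argue on the primal side via monotonicity of the restricted worst-case expectation in the scenario pool together with the restriction bound $v_{\Upsilon}(\hat{\mathbf{x}})\leq v(\hat{\mathbf{x}})$, which incidentally gives the slightly sharper identity $\mathrm{LB}=\mathbf{c}^{\intercal}\hat{\mathbf{x}}+v(\hat{\mathbf{x}})$ where the paper only needs the inequality.
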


\begin{proof}[Proof of Claim \ref{cla-ccg}]
	Consider the inner CG algorithm. For the obtained $\widehat{\Omega}_s(\hat{\mathbf{x}})$'s, 
	\begin{align}
		\hspace{-2mm}
		v(\hat{\mathbf{x}})=&\max_{\pi_{\bm{\xi}_s}\in\mathbb{R}_+}~\sum_{s=1}^S \sum_{\bm{\xi}_s\in\widehat{\Omega}_s(\hat{\mathbf{x}})}\pi_s^{\rm e}Q(\hat{\mathbf{x}},\bm{\xi}_{s})\pi_{\bm{\xi}_s}\label{pf-ccg-1}\\
		&~{\rm s.t.}~\sum_{\bm{\xi}_s\in\widehat{\Omega}_s}\pi_{\bm{\xi}_s}=1,\quad \forall s\in[S] \label{pf-ccg-2}\\
		&\quad\quad\;\;\sum_{s=1}^S\pi_s^{\rm e}\sum_{\bm{\xi}_s\in\widehat{\Omega}_s(\hat{\mathbf{x}})}\left\|\bm{\xi}_{s}-\bm{\xi}_s^{\rm e} \right\|_1\pi_{\bm{\xi}_s}\leq r\label{pf-ccg-2-1} \\
		=&\min_{\alpha_s\in\mathbb{R},\beta\in\mathbb{R}_+}~\sum_{s=1}^S\alpha_s+r\beta\label{pf-ccg-3}\\
		&~\nonumber{\rm s.t.}~\pi_s^{\rm e}(Q(\hat{\mathbf{x}},\bm{\xi}_s)-\beta\left\|\bm{\xi}_s-\bm{\xi}_s^{\rm e}\right\|_1)-\alpha_s\leq 0,\\
		&\qquad\qquad\qquad\qquad\quad\forall \bm{\xi}_s\in\widehat{\Omega}_s(\hat{\mathbf{x}}),\;\forall s\in[S]\label{pf-ccg-4}.
	\end{align}
	The second equality is obtained because of the strong duality of \textbf{PMP}. $\{{\alpha}_s\}_{s=1}^S$ and ${\beta\geq0}$ are multipliers of \eqref{pf-ccg-2} and \eqref{pf-ccg-2-1}, respectively.

	Shift perspective to the outer C\&CG algorithm,  we have
	\begin{align}
		\hspace{-4mm}
		\mathrm{LB}=&~\mathbf{c}^{\intercal}\hat{\mathbf{x}}+\min_{\alpha_s\in\mathbb{R},\beta\in\mathbb{R}_+}~\sum_{s=1}^S\alpha_s+r\beta\\
		&\nonumber\qquad\quad{\rm s.t.}~{\alpha}_s+\pi_s^{\rm e}\left\|\bm{\xi}_s-\bm{\xi}_s^{\rm e} \right\|_1 \beta\geq\pi_s^{\rm e}Q(\hat{\mathbf{x}},\bm{\xi}_s),\\
		&\qquad\qquad\qquad\qquad\qquad\forall \bm{\xi}_s\in\Upsilon_s,\forall s\in[S]\\
		\geq&~\mathbf{c}^{\intercal}\hat{\mathbf{x}}+\min_{\alpha_s\in\mathbb{R},\beta\in\mathbb{R}_+}~\sum_{s=1}^S\alpha_s+r\beta\\
		&\nonumber\qquad\quad{\rm s.t.}~{\alpha}_s+\pi_s^{\rm e}\left\|\bm{\xi}_s-\bm{\xi}_s^{\rm e} \right\|_1 \beta\geq\pi_s^{\rm e}Q(\hat{\mathbf{x}},\bm{\xi}_s),\\
		&\qquad\qquad\qquad\qquad\qquad\forall \bm{\xi}_s\in\widehat{\Omega}_s(\hat{\mathbf{x}}),\forall s\in[S]\\
		=&~\mathbf{c}^{\intercal}\hat{\mathbf{x}}+v(\hat{\mathbf{x}})\\
		\geq&~\mathrm{UB}.
	\end{align}
	The first equality readily follows from the definition of LB in \eqref{def-lb}. The first inequality is derived since $\widehat{\Omega}_s(\hat{\mathbf{x}})\subseteq\Upsilon_s$ for each $s\in[S]$. The second equality follows from  \eqref{pf-ccg-1}--\eqref{pf-ccg-4}. The second inequality holds due to the definition of UB in \eqref{def-ub}. On the other hand, $\mathrm{LB}\leq \mathrm{UB}$ always holds. Thus, we have $\mathrm{UB}=\mathrm{LB}$.	
\end{proof}

Take $\varepsilon$ as 0. According to the proofs of Corollary \ref{thm-converge-cg} and Claim \ref{cla-ccg}, the number of iterations of C\&CG-DRO(CG) before termination is finite, which is bounded by the number of the extreme points of $\widetilde{\Xi}_s$'s, i.e., $3^{m_{\xi}}\cdot S$.
\end{proof}

\section{Parameters in Numerical Studies}\label{case-set}
The economic and technical parameters of the equipment of MEMG are summarized in Table \ref{para-device}. Table \ref{para-other} contains the other parameters for MEMG's dispatch. The forecasted values and the sample spaces of the random factors are provided in Fig. \ref{fig-space}.

\begin{table}[htbp]
  \centering
  \caption{Equipment Parameters of MEMG}
  \resizebox{1\linewidth}{!}{%
    \begin{tabular}{ccccc}
    \toprule
    Equipment & Parameter &       & Equipment & Parameter \\
\cmidrule{1-2}\cmidrule{4-5}    WT/PV & \makecell[c]{$\overline{P}_{\rm wt}=3000~{\rm kW}$\\
$\overline{P}_{\rm pv}=4000~{\rm kW}$} &       & HT    & \makecell[c]{$\overline{H}_{\rm ht}=300~{\rm kg}$ \\
$\nu_{\rm ht}=2\%$} \\
\cmidrule{1-2}\cmidrule{4-5}    BSS   & \makecell[c]{$\overline{P}_{\rm bss}=2000~{\rm kW}$\\
$\overline{E}_{\rm bss}=4000~{\rm kWh}$\\
$\underline{E}_{\rm bss}=3400~{\rm kWh}$\\
$\eta_{\rm bss,c}=\eta_{\rm bss,d}=90\%$\\
$c_{\rm bss}^{\rm deg}=0.001~{\rm \$/kWh}$} &       & HWT   & \makecell[c]{$\overline{N}_{\rm hwt}=700~{\rm kWh}$\\
$\nu_{\rm hwt}=2\%$} \\
\cmidrule{1-2}\cmidrule{4-5}    ELZ   & \makecell[c]{$c_{\rm elz,c}^{\rm su}=c_{\rm elz,c}^{\rm sd}=10~\$$\\
$c_{\rm elz,w}^{\rm su}=c_{\rm elz,w}^{\rm sd}=0.2~\$$\\
$c_{\rm elz}^{\rm om}=0.001~{\rm \$/kWh}$\\
$\overline{P}_{\rm elz}=1500~{\rm kW}$\\
$\underline{P}_{\rm elz}=225~{\rm kW}$\\
$P_{\rm elz,s}=22.5~{\rm kW}$\\
$\tau_{\rm cold}=1~{\rm h}$\\
$\eta_{\rm elz}=75.66\%$\\
$eta_{\rm rlz,r}=92.03\%$\\
$LHV_{\rm H_2}=33.33~{\rm kW/kg}$} &       & FC    & \makecell[c]{$c_{\rm fc}^{\rm su}=c_{\rm fc}^{\rm sd}=10~\$$\\
$c_{\rm fc}^{\rm om}=0.001~{\rm \$/kWh}$\\
$\overline{P}_{\rm fc}=1000~{\rm kW}$\\
$\underline{P}_{\rm fc}=50~{\rm kW}$\\
$\eta_{\rm fc}=27.27\%$\\
$\eta_{\rm fc,r}=82.14\%$} \\
    \bottomrule
    \end{tabular}%
}
  \label{para-device}%
\end{table}%

\begin{table}[htbp]
  \centering
  \caption{Other Parameters for MEMG's Dispatch}
  \setlength{\tabcolsep}{9.5pt}{%
    \begin{tabular}{ccrcc}
    \toprule
    Parameter & Value &       & Parameter & Value \\
\cmidrule{1-2}\cmidrule{4-5}    $\overline{H}_{\rm buy}$     & $200~{\rm kg}$ &       & $\overline{U}_{\rm g,buy}$     & $2$ \\
    $\overline{P}_{\rm sub}$  & $1500~{\rm kW}$ &       & $\iota_{\rm e}/\iota_{\rm h}$    & $0.2/0.23~{\rm \$/kWh}$ \\
    $P_{\rm d}$    & $3000~{\rm kW}$ &       & $M_{\rm d}$    & $700~{\rm kW}$ \\
    \bottomrule
    \end{tabular}%
}
  \label{para-other}%
\end{table}%

\begin{figure}[h!]
	\centering
	\subfloat[WT outputs.]{
		\label{}
		\includegraphics[width=0.235\textwidth]{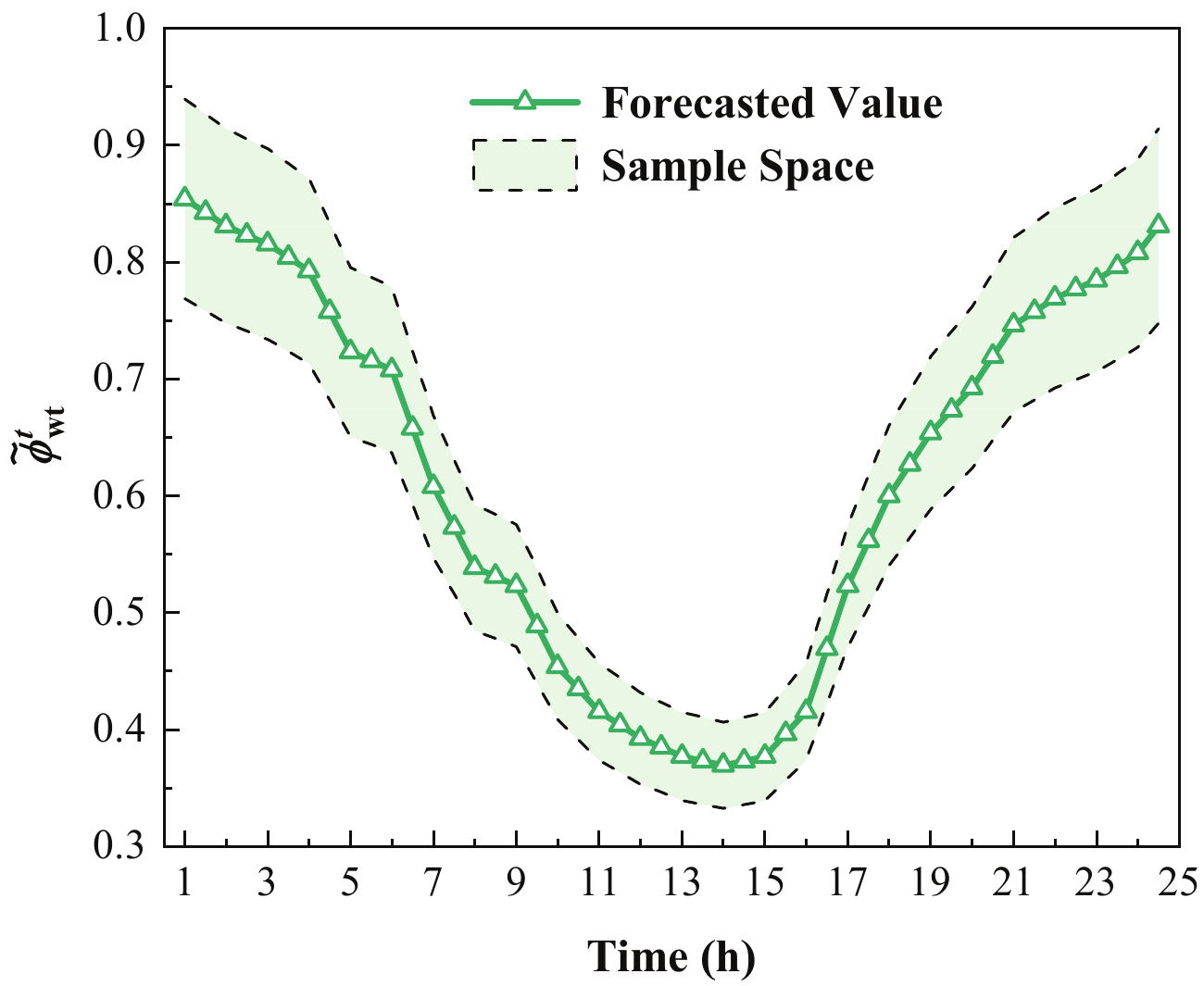}
	}
	\subfloat[PV outputs.]{
		\label{}
		\includegraphics[width=0.235\textwidth]{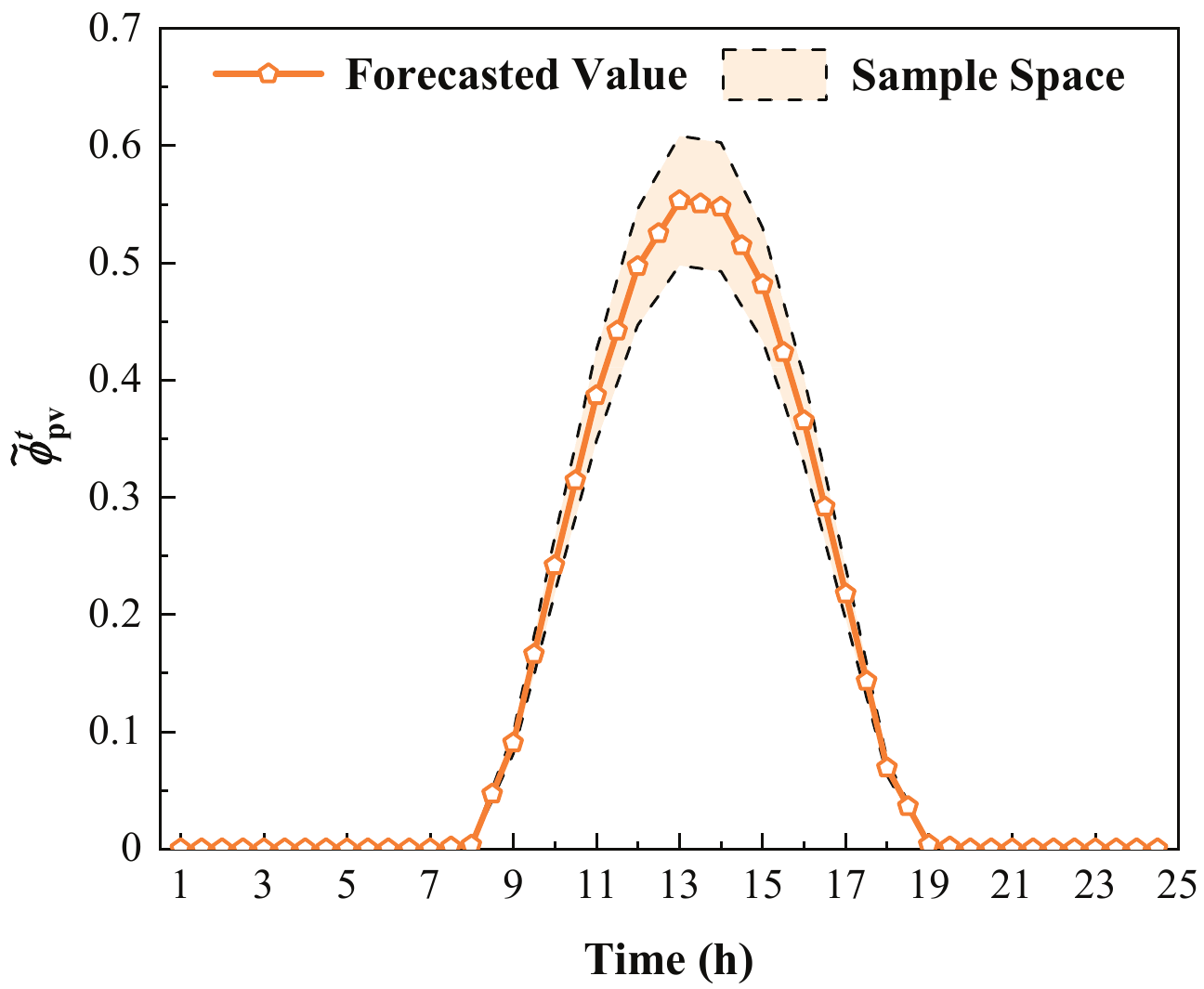}
	}\\
	\subfloat[ELectricity demand.]{
		\label{}
		\includegraphics[width=0.235\textwidth]{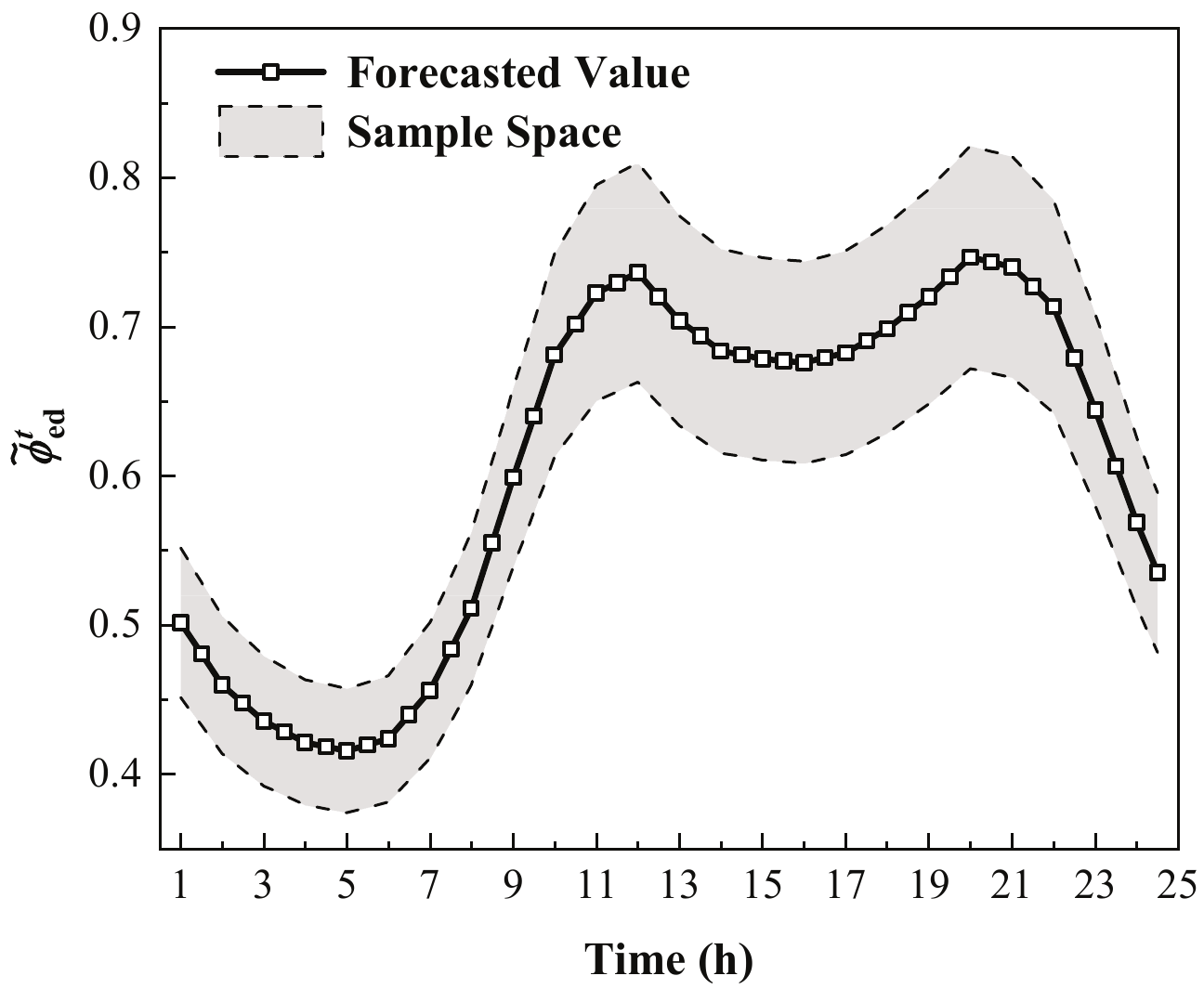}
	}
	\subfloat[Heat demand.]{
		\label{}
		\includegraphics[width=0.235\textwidth]{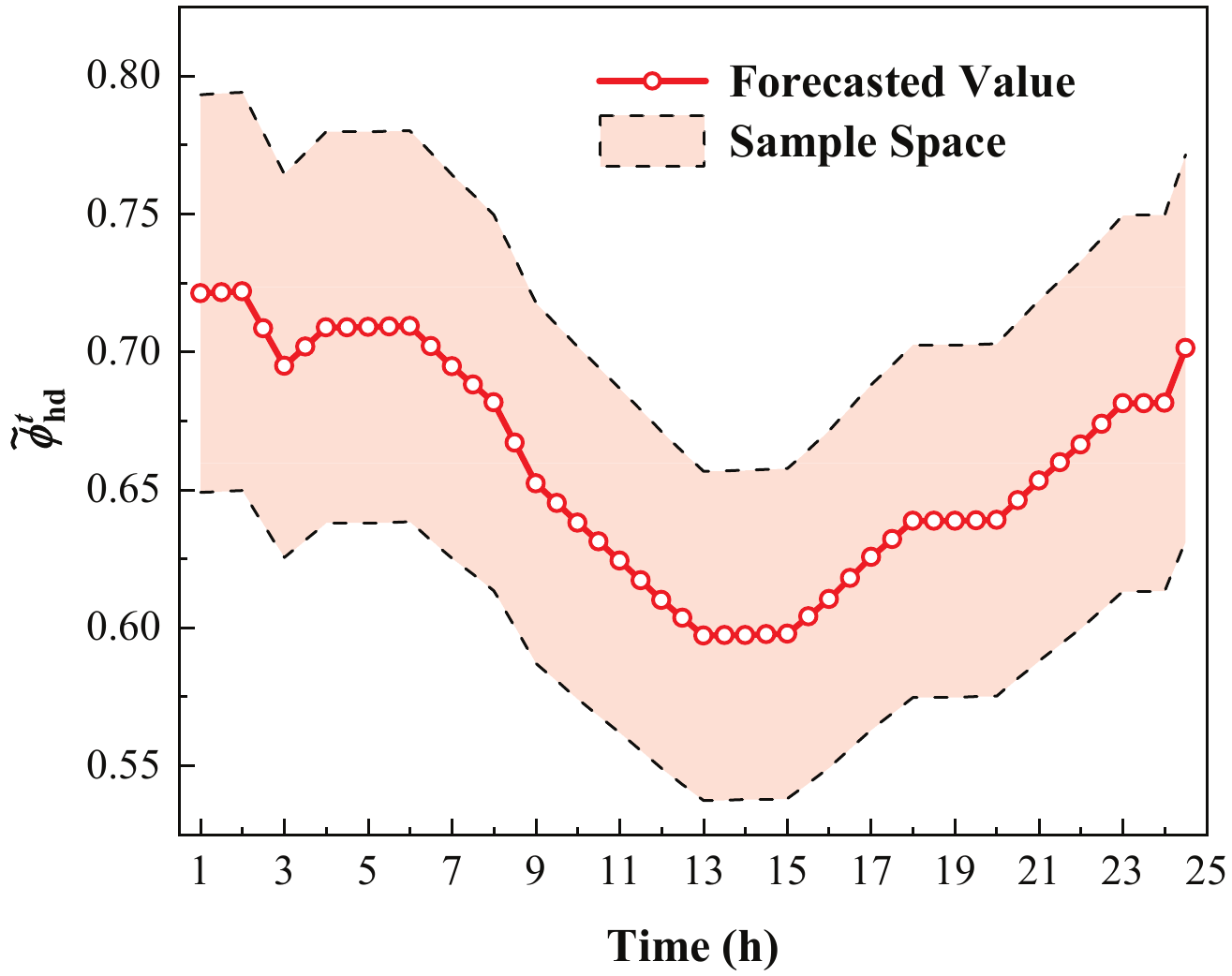}
	}
	\caption{Forecasted values and sample spaces of random factors.} 
	\label{fig-space}
\end{figure}

\section{Out-of-Sample Evaluation Indices}\label{index-oos}
The out-of-sample evaluation is conducted on a test set $\Xi^{\rm o}\triangleq\{\bm{\xi}_1^{\rm o},\cdots,\bm{\xi}_{S^{\rm o}}^{\rm o}\}$ that includes $S^{\rm o}$ scenarios. We define six indices, i.e., the out-of-sample cost (OOSC), probabilities of electricity and heat load shedding (PELS and PHLS), expected electrical and heat energy not supplied (EEENS and EHENS), and expected net CO\textsubscript{2} emissions (ENCE),  as follows.
\begin{align}
	&{\rm OOSC}=\mathbf{c}^{\intercal}\hat{\mathbf{x}}+\frac{1}{S^{\rm o}}\sum_{s=1}^{S^{\rm o}}Q(\hat{\bm{x}},\bm{\xi}_s^{\rm o})\\
	&{\rm PELS}=\frac{1}{S^{\rm o}}\sum_{s=1}^{S^{\rm o}} \mathbbm{1}_{>0}\left(\sum_{t=1}^T\hat{\tilde{p}}_{\rm loss}^{s,t}\right)\times100\%\\
	&{\rm PHLS}=\frac{1}{S^{\rm o}}\sum_{s=1}^{S^{\rm o}} \mathbbm{1}_{>0}\left(\sum_{t=1}^T\hat{\tilde{m}}_{\rm loss}^{s,t}\right)\times100\%\\
	&{\rm EEENS}=\frac{1}{S^{\rm o}}\sum_{s=1}^{S^{\rm o}} \left(\sum_{t=1}^T\hat{\tilde{p}}_{\rm loss}^{s,t}\right)\Delta_t  \\
	&{\rm EHENS}=\frac{1}{S^{\rm o}}\sum_{s=1}^{S^{\rm o}} \left(\sum_{t=1}^T\hat{\tilde{m}}_{\rm loss}^{s,t}\right)\Delta_t\\
	&{\rm ENCE}=\frac{1}{S^{\rm o}}\sum_{s=1}^{S^{\rm o}} \varrho_{\rm co_2}\left(\hat{\tilde{p}}_{\rm buy}^{s,t}-\hat{\tilde{p}}_{\rm sell}^{s,t}\right)\Delta_t
\end{align}
$\hat{\mathbf{x}}$ is the optimal pre-dispatch decisions of \textbf{DRD-MEMG}. $\hat{\tilde{p}}_{\rm loss}^{s,t}$, $\hat{\tilde{m}}_{\rm loss}^{s,t}$, $\hat{\tilde{p}}_{\rm buy}^{s,t}$, and $\hat{\tilde{p}}_{\rm sell}^{s,t}$ are the corresponding optimal solutions to the recourse problem \eqref{2stage-1}--\eqref{2stage-3} with $\hat{\mathbf{x}}$ and scenario $\bm{\xi}_s^{\rm o}$. $\mathbbm{1}_{>0}(\cdot)$ is the characteristic function and
\begin{align}
	\mathbbm{1}_{>0}\left(\sum_{t=1}^T\tilde{p}_{\rm loss}^{s,t}\right)&\triangleq\left\{\begin{matrix}
		1,	\quad{\rm if}\quad  \sum_{t=1}^T\tilde{p}_{\rm loss}^{s,t}>0 \\ 
		0,	\quad{\rm if}\quad  \sum_{t=1}^T\tilde{p}_{\rm loss}^{s,t}=0
	\end{matrix}\right. \\
	\mathbbm{1}_{>0}\left(\sum_{t=1}^T\tilde{m}_{\rm loss}^{s,t}\right)&\triangleq\left\{\begin{matrix}
		1,	\quad{\rm if}\quad  \sum_{t=1}^T\tilde{m}_{\rm loss}^{s,t}>0 \\ 
		0,	\quad{\rm if}\quad  \sum_{t=1}^T\tilde{m}_{\rm loss}^{s,t}=0
	\end{matrix}\right. .
\end{align}
$\varrho_{\rm co_2}=0.5856~{\rm kg}/{\rm kWh}$ is the CO\textsubscript{2} emission factor.

\end{document}